\def\XXint#1#2#3{{\setbox0=\hbox{$#1{#2#3}{\int}$}
     \vcenter{\hbox{$#2#3$}}\kern-.5\wd0}}
\newtheorem{thm}{Theorem}[section]
\newtheorem{lem}[thm]{Lemma}
\newtheorem{cor}[thm]{Corollary}
\newtheorem{rem}{Remark}
\newcommand{\R}{{\mathbb R}}
\newcommand{\N}{{\mathbb N}}
\numberwithin{equation}{section}
\newcommand{\loglap}{L_{\text{\tiny $\Delta \,$}}\!}
\newcommand{\cD}{{\mathcal D}}
\newcommand{\cH}{{\mathbb H}}
\newcommand{\be}[1]{\begin{equation}\label{#1}}
\newcommand{\ee}{\end{equation}}
\title[Spectral properties of the logarithmic Laplacian]
{Spectral properties of the logarithmic Laplacian}
\author{}
\author{Ari Laptev}
\thanks{Department of Mathematics,
Imperial College London,
Huxley Building, 180 Queen's Gate
London SW7 2AZ, UK, a.laptev@imperial.ac.uk}  
\author{Tobias Weth} 
\thanks{Institut f\"ur Mathematik, Goethe-Universit\"at, Frankfurt, Robert-Mayer-Stra\ss e 10, D-60629 Frankfurt, Germany, weth@math.uni-frankfurt.de.}
\begin{document}

\maketitle

\begin{abstract}
We obtain spectral inequalities and asymptotic formulae for the discrete spectrum of the operator $\frac12\, \log(-\Delta)$ in an open set $\Omega\in\Bbb R^d$, $d\ge2$, of finite measure with Dirichlet boundary conditions. We also derive some results regarding lower bounds for the eigenvalue $\lambda_1(\Omega)$ and compare them with previously known inequalities. 
\end{abstract}

\thispagestyle{empty}
\parindent=0pt
\parskip=5pt

\section{Introduction}
\label{sec:introduction}

In the present paper, we study spectral estimates for the logarithmic Laplacian 
$\loglap= \log (-\Delta)$, which is a (weakly) singular integral operator with Fourier symbol $2\log |\eta|$ and arises as formal derivative $\partial_s \Big|_{s=0} (-\Delta)^s$ of fractional Laplacians at $s= 0$. The study of $\loglap$ has been initiated recently in \cite{HW}, where its relevance for the study of asymptotic spectral properties of the family of fractional Laplacians in the limit $s \to 0^+$ has been discussed. A further motivation for the study of $\loglap$ is given in \cite{jarohs-saldana-weth}, where it has been shown that this operator allows to characterize the $s$-dependence of solution to fractional Poisson problems for the full range of exponents $s \in (0,1)$.  The logarithmic Laplacian also arises in the geometric context of the $0$-fractional perimeter, which has been studied recently in \cite{DNP}. 

For matters of convenience, we state our results for the operator $\mathcal H= \frac{1}{2}\loglap$ 
which corresponds to the quadratic form 
\begin{equation}
\label{log-quadratic}
\varphi \mapsto (\varphi,\varphi)_{log} := \frac{1}{(2\pi)^{d}} \, \int_{\Bbb R^d} \log(|\xi|)\,  |\widehat{\varphi}(\xi)|^2\, d\xi.
\end{equation}
Here and in the following, we let $\widehat{\varphi}$ denote the Fourier transform 
$$
\xi \mapsto \widehat{\varphi}(\xi)= \int_{\R^d} e^{-ix \xi} \varphi(x)\,dx
$$
of a function $\varphi\in L^2(\R^d)$. Let $\Omega\subset \Bbb R^d$ be an open set of finite measure, and let 
$\cH(\Omega)$ denote the closure of $C^\infty_c(\Omega)$ with respect to the norm
\begin{equation}
  \label{eq:def-norm--star}
\varphi \mapsto \|\varphi\|_{*}^2:= \int_{\Bbb R^d} \log(e + |\xi|)\,  |\widehat{\varphi}(\xi)|^2\, d\xi.
\end{equation}
Then $(\cdot,\cdot)_{log}$  defines a closed, symmetric and semibounded quadratic form with domain $\cH(\Omega) \subset L^2(\Omega)$, see Section~\ref{sec:prel-basic-prop} below. Here and in the following, we identify $L^2(\Omega)$ with the space of functions $u \in L^2(\R^d)$ with $u \equiv 0$ on $\R^d \setminus \Omega$. Let 
$$
\mathcal H : \cD(\mathcal H) \subset L^2(\Omega) \to L^2(\Omega) 
$$
be the unique self-adjoint operator associated with the quadratic form. The eigenvalue problem for $\mathcal H$ then writes as 
\begin{equation}\label{D}
\left\{
  \begin{aligned}
\mathcal H \varphi &= \lambda \varphi, &&\qquad \text{in $\Omega$,}\\
\varphi &= 0, &&\qquad \text{on $\R^d \setminus \Omega$.}
  \end{aligned}
\right.
\end{equation}
We understand (\ref{D}) in weak sense, i.e. 
$$
\varphi \in \cH(\Omega) \quad \text{and}\quad (\varphi,\psi)_{log}= \lambda \int_{\Omega}\varphi(x)\psi(x)\,dx \quad \text{for all $\psi \in \cH(\Omega)$.}
$$
As noted in \cite[Theorem 1.4]{HW}, there exists a sequence of eigenvalues 
$$
\lambda_1(\Omega)< \lambda_2(\Omega) \le \dots, \qquad \lim_{k \to \infty} \lambda_k(\Omega) = \infty 
$$
and a corresponding complete orthonormal system of eigenfunctions. We note that the discreteness of the spectrum is a consequence of the fact that the embedding $\cH(\Omega) \hookrightarrow L^2(\Omega)$ is compact. In the case of bounded open sets, the compactness of this embedding follows easily by Pego's criterion~\cite{Pego}. In the case of unbounded open sets of finite measure, the compactness can be deduced from \cite[Theorem 1.2]{jarohs-weth} and estimates for $\|\cdot\|_*$, see Corollary~\ref{cor-compact-embedding} below. 

In Section \ref{sec:prel-basic-prop}, using the results from \cite{HW} and \cite{FKV}, we discuss properties  of functions from  $\mathcal D(\mathcal H)$. In particular, we show that $e^{ix\xi}\big|_{x\in\Omega} \in \mathcal D(\mathcal H)$, $\xi\in\Bbb R^d$, provided $\Omega$ is an open bounded sets with Lipschitz boundary.

In Section \ref{sec:deriving-an-upper-1} we obtain a sharp upper bound for the Riesz means and for the number of eigenvalues  $N(\lambda)$ of the operator $\mathcal H$ below $\lambda$. Here we use technique developed in papers \cite{Bz1}, \cite{Bz2}, \cite{LY} and \cite{L}. In \cite{Lap} it was noticed that such technique could be applied for a class of pseudo-differential operators with Dirichlet boundary conditions in domains of finite measure without any requirements on the smoothness of the boundary. 

 We discuss  lower bounds for $\lambda_1(\Omega)$ in Section \ref{sec:lower-bound-lambd}. In Theorem \ref{lower-bound-lambda_1-first} we present an estimate that is valid for arbitrary open sets of finite measure. For sets with Lipschitz boundaries, H.Chen and T.Weth \cite{HW} have proved a Faber-Krahn inequality for the operator $\mathcal H$ that reduces the problem to the estimate of 
$\lambda_1(B)$, where $B$ is a ball satisfying $|B| = |\Omega|$, see Corollary \ref{cor-faber-krahn}. In Theorem \ref{lower-bound-lambda-1-second} we find an estimate for $\lambda_1(B_d)$, where $B_d$ is the unit ball, that is better in lower dimensions than the one obtained in Theorem \ref{lower-bound-lambda_1-first}. We also compare our results with bounds resulting from previously known spectral inequalities obtained in  \cite{BK} and \cite{B}.

In Section \ref{LowB1} we obtain asymptotic lower bounds using the coherent states transformation approach given in \cite{G}. It allows us to derive, in Section \ref{Weyl}, asymptotics for the Riesz means of eigenvalues in  Theorem \ref{3.1} and for $N(\lambda)$ in Corollary \ref{3.2}. Here $\Omega \subset \R^N$ is an arbitrary open set of finite measure without any additional restrictions on the boundary. 

Finally in Section \ref{LowB2} we obtain uniform bounds on the Riesz means of the eigenvalues using the fact that for bounded open sets with Lipschitz boundaries we have $e^{ix\xi}\big|_{x\in\Omega} \in \mathcal D(\mathcal H)$.


\section{Preliminaries and basic properties of eigenvalues}
\label{sec:prel-basic-prop}

As before, let $(\cdot,\cdot)_{log}$ denote the quadratic form defined in (\ref{log-quadratic}), and let, for an open set $\Omega \subset \R^d$, 
 $\cH(\Omega)$ denote the closure of $C^\infty_c(\Omega)$ with respect to the norm $\|\cdot\|_*$ defined in (\ref{eq:def-norm--star}). 

\begin{lem}
\label{closed-semibounded}
Let $\Omega \subset \R^d$ be an open set of finite measure. Then $(\cdot,\cdot)_{log}$  defines a closed, symmetric and semibounded quadratic form with domain $\cH(\Omega) \subset L^2(\Omega)$.
\end{lem}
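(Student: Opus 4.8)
The plan is to verify the three properties—symmetry, semiboundedness, and closedness—essentially by reducing everything to statements about the Fourier transform and the weighted $L^2$-space defined by $\|\cdot\|_*$. Symmetry is immediate: for real-valued $\varphi,\psi \in C_c^\infty(\Omega)$ one has $\widehat{\varphi(-\cdot)} = \overline{\widehat\varphi}$ up to the usual conventions, and more directly the bilinear form $(\varphi,\psi)_{log} = (2\pi)^{-d}\int_{\R^d}\log|\xi|\,\widehat\varphi(\xi)\overline{\widehat\psi(\xi)}\,d\xi$ is manifestly Hermitian since the multiplier $\log|\xi|$ is real; restricting to the real form gives symmetry on $\cH(\Omega)$ by density.

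For semiboundedness, the key elementary inequality is that for every $\xi \in \R^d$,
\begin{equation*}
\log|\xi| \;=\; \log(e+|\xi|) + \log\frac{|\xi|}{e+|\xi|} \;\ge\; \log(e+|\xi|) - C_d
\end{equation*}
is false pointwise near $\xi = 0$ (the left side $\to -\infty$), so instead I would split the frequency space into $\{|\xi|\le 1\}$ and $\{|\xi|>1\}$. On $\{|\xi|>1\}$ we have $0 \le \log|\xi| \le \log(e+|\xi|)$. On $\{|\xi|\le 1\}$ the factor $\log|\xi|$ is negative but bounded below in $L^1_{loc}$; combined with the fact that $\widehat\varphi$ is uniformly bounded by $\|\varphi\|_{L^1} \le |\Omega|^{1/2}\|\varphi\|_{L^2}$ for $\varphi$ supported in $\Omega$, the contribution of the low-frequency region is bounded below by $-C(|\Omega|,d)\,\|\varphi\|_{L^2(\Omega)}^2$. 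This yields $(\varphi,\varphi)_{log} \ge -C\|\varphi\|_{L^2(\Omega)}^2$ and simultaneously the two-sided comparison
\begin{equation*}
(\varphi,\varphi)_{log} + C\|\varphi\|_{L^2(\Omega)}^2 \;\asymp\; \|\varphi\|_*^2,
\end{equation*}
valid first on $C_c^\infty(\Omega)$ and then, by density, on all of $\cH(\Omega)$. Here I would cite the analogous estimates already established in \cite{HW} (and the mapping properties from \cite{FKV}) rather than reprove them from scratch, since the excerpt explicitly says these properties are derived "using the results from \cite{HW} and \cite{FKV}."

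Closedness is then a formal consequence of this norm equivalence. Equip $\cH(\Omega)$ with the form norm $\varphi \mapsto \big((\varphi,\varphi)_{log} + (C+1)\|\varphi\|_{L^2(\Omega)}^2\big)^{1/2}$; by the displayed equivalence this is equivalent to $\|\cdot\|_*$, and $\cH(\Omega)$ is by definition complete with respect to $\|\cdot\|_*$. Hence the form is closed. Finally I would record that the form is densely defined in $L^2(\Omega)$ because $C_c^\infty(\Omega) \subset \cH(\Omega)$ is dense in $L^2(\Omega)$, which is needed for the association with a self-adjoint operator.

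The main obstacle is the careful bookkeeping at low frequencies: one must make sure that the negative part $\int_{|\xi|\le 1}\log|\xi|\,|\widehat\varphi(\xi)|^2\,d\xi$ is genuinely controlled by $\|\varphi\|_{L^2(\Omega)}^2$ with a constant depending only on $|\Omega|$ and $d$, and not on the geometry of $\Omega$; this is where the bound $\|\widehat\varphi\|_{L^\infty} \le |\Omega|^{1/2}\|\varphi\|_{L^2}$ (a consequence of $\mathrm{supp}\,\varphi\subset\Omega$ and Cauchy–Schwarz) does the work, together with the integrability of $\log|\xi|$ near the origin in dimension $d\ge 1$. Everything else—symmetry, the high-frequency comparison, and deducing closedness from the norm equivalence—is routine.
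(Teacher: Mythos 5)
Your proposal is correct and follows essentially the same route as the paper: split the frequency integral at a fixed radius, control the low-frequency part via $\|\widehat\varphi\|_\infty\le\|\varphi\|_1\le|\Omega|^{1/2}\|\varphi\|_2$ and the local integrability of $\log|\xi|$, deduce the equivalence of the form norm with $\|\cdot\|_*$ on $C^\infty_c(\Omega)$, and get closedness from the completeness of $(\cH(\Omega),\|\cdot\|_*)$. The only cosmetic difference is that the paper carries this out directly (splitting at $|\xi|=2$ with an explicit constant $c_1$) without invoking \cite{HW} or \cite{FKV}, which are only needed later for the equivalence with the double-integral norm $\|\cdot\|_{**}$.
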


\begin{proof}
Obviously, the form $(\cdot,\cdot)_{log}$ is  symmetric. For functions $\varphi \in C^\infty_c(\Omega)$, we have 
\begin{equation}
  \label{eq:basic-fourier-ineq}
(2\pi)^{d} \|\varphi\|_2^2 =\|\widehat \varphi\|_2^2 \le \|\varphi\|_*^2. 
\end{equation}
Moreover, with $c_1:= \log (e+2)+ \sup \limits_{t \ge 2}\frac{\log (e+t)}{\log t}$
we have
\begin{align}
\frac{\|\varphi\|_{*}^2}{c_1} &\le \| \widehat \varphi\|_{2}^2+  \int_{|\xi| \ge 2}\ln |\xi| |\widehat{\varphi}(\xi)|^2  \, d\xi \nonumber\\
&\le (2\pi)^d \bigl( \|\varphi\|_{2}^2 +  (\varphi,\varphi)_{log}\bigr) 
- \int_{|\xi| \le 2}\ln |\xi| |\widehat{\varphi}(\xi)|^2  \, d\xi \nonumber \\
&\le (2\pi)^d \bigl( \|\varphi\|_{2}^2 +  (\varphi,\varphi)_{log}\bigr) 
+ \bigl\| \ln |\cdot| \bigr\|_{L^1(B_2(0))} \|\widehat{\varphi}\|_\infty^2 \label{closed-semibounded-est-1}
\end{align}
while 
\begin{equation}
  \label{eq:closed-semibounded-est-2}
\|\widehat{\varphi}\|_\infty^2 \le \|\varphi\|_1^2 \le |\Omega|\, \|\varphi\|_2^2.
\end{equation}
Consequently, 
\begin{align}
(\varphi,\varphi)_{log} &\ge   \frac{\|\varphi\|_*^2}{(2\pi)^d c_1}-
\left(1+ \frac{|\Omega|\, \bigl\| \ln |\cdot| \bigr\|_{L^1(B_2(0))}}{(2\pi)^{d}}\right)\|\varphi\|_2^2 \label{intermediate-est}\\
&\ge  \left(\frac{1}{c_1}\,-\,1\,-\,\frac{|\Omega|\, \bigl\| \ln |\cdot| \bigr\|_{L^1(B_2(0))}}{(2\pi)^{d}}\right)\|\varphi\|_2^2. \nonumber
\end{align}
In particular, $(\varphi,\varphi)_{log}$ is semibounded. Moreover, it follows from (\ref{intermediate-est}) and the completeness of $(\cH(\Omega),\|\cdot\|_*)$ that the form $(\varphi,\varphi)_{log}$ is closed on $\cH(\Omega)$.  
\end{proof}

\begin{lem}
\label{equivalent-norms}
Let $\Omega \subset \R^d$ be an open set of finite measure. Then 
\begin{equation}
  \label{eq:def-norm-double-star}
\varphi \mapsto \|\varphi\|_{**}^2:= 
\int \!\!\! \int_{|x-y|\le 1} \frac{(\varphi(x)-\varphi(y))^2}{|x-y|^d}\,dxdy
\end{equation}
defines an equivalent norm to the norm $\|\cdot\|_*$ defined in (\ref{eq:def-norm--star}) on $C^\infty_c(\Omega)$. 
\end{lem}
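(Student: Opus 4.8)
The plan is to establish the two-sided bound $c\|\varphi\|_{**}^2 \le \|\varphi\|_*^2 \le C \|\varphi\|_{**}^2$ for all $\varphi \in C_c^\infty(\Omega)$, with constants $c, C > 0$ depending only on $d$. Both norms are expressed through the Fourier transform: by Plancherel,
\[
\|\varphi\|_{**}^2 = \frac{1}{(2\pi)^d}\int_{\R^d} m(\xi)\, |\widehat\varphi(\xi)|^2\, d\xi, \qquad m(\xi) := \int_{|z|\le 1} \frac{1-\cos(z\cdot\xi)}{|z|^d}\, dz,
\]
so the problem reduces to showing that the multiplier $m(\xi)$ is comparable to $\log(e+|\xi|)$ on all of $\R^d$. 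This is a purely elementary computation with the kernel $|z|^{-d}$ on the unit ball.

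First I would record the basic properties of $m$: it is radial (by rotational invariance of $|z|^{-d}$ on $B_1$), continuous, nonnegative, and vanishes only at $\xi = 0$; hence on any compact set away from the origin it is bounded above and below by positive constants, and near the origin $m(\xi) = O(|\xi|^2)$ since $1-\cos(z\cdot\xi)\le \tfrac12|z|^2|\xi|^2$, while $\log(e+|\xi|)-1 = O(|\xi|^2)$ as well — so the two are comparable on, say, $|\xi|\le 2$. The substance is the regime $|\xi| \to \infty$. Writing $r = |\xi|$ and using radial symmetry, substitute $z = w/r$ to get $m(\xi) = \int_{|w|\le r} \frac{1-\cos(w\cdot e)}{|w|^d}\,dw$ for a unit vector $e$; splitting the region $|w|\le r$ into $|w|\le 1$ (a fixed finite contribution) and $1\le |w|\le r$, on the latter annulus one integrates $|w|^{-d}$ in polar coordinates to produce a factor $\log r$, with the oscillatory term $\cos(w\cdot e)$ contributing only a bounded amount (it integrates to something $O(1)$ because the angular average of $\cos(w\cdot e)$ decays in $|w|$, e.g. like a Bessel function). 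This yields $m(\xi) = \omega_{d-1}\log|\xi| + O(1)$ as $|\xi|\to\infty$, which is comparable to $\log(e+|\xi|)$.

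Combining the small-$|\xi|$ and large-$|\xi|$ estimates gives $m(\xi) \asymp \log(e+|\xi|)$ uniformly on $\R^d$, and integrating against $|\widehat\varphi(\xi)|^2$ yields the claimed equivalence of norms on $C_c^\infty(\Omega)$ (in fact on all of $C_c^\infty(\R^d)$; the restriction to $\Omega$ plays no role here). The main obstacle is the careful bookkeeping of the oscillatory contribution in the large-$|\xi|$ asymptotics — showing that $\int_{1\le |w|\le r}\frac{\cos(w\cdot e)}{|w|^d}\,dw$ stays bounded as $r\to\infty$; this is where one needs a little harmonic analysis (stationary phase / decay of $\int_{S^{d-1}}e^{i\rho\, \theta\cdot e}\,d\sigma(\theta)$ like $\rho^{-(d-1)/2}$) rather than brute force, though for the comparability statement one only needs boundedness, not the precise constant. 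Everything else is routine. I also note that this multiplier comparison is essentially contained in the analysis of $\loglap$ in \cite{HW}, so one may alternatively cite the kernel representation there and quote the corresponding symbol estimate directly.
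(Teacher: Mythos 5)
Your reduction to a Fourier multiplier comparison is the right idea for the high-frequency regime, but the key claim fails at low frequencies, and with it the assertion that ``the restriction to $\Omega$ plays no role.'' Writing $\|\varphi\|_{**}^2=(2\pi)^{-d}\int_{\R^d} m(\xi)|\widehat\varphi(\xi)|^2\,d\xi$ with $m(\xi)=\int_{|z|\le 1}\frac{1-\cos(z\cdot\xi)}{|z|^d}\,dz$, you have $m(0)=0$ and $m(\xi)\le C|\xi|^2$ near the origin, while $\log(e+|\xi|)\ge 1$ everywhere; so the pointwise bound $m(\xi)\ge c\,\log(e+|\xi|)$ is false on any neighborhood of $\xi=0$ (comparing $m$ with $\log(e+|\xi|)-1$, as you implicitly do, is not the same thing). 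Consequently the direction $\|\varphi\|_*^2\le C\|\varphi\|_{**}^2$ cannot follow from a multiplier comparison alone, and indeed it is false on $C^\infty_c(\R^d)$: taking $\varphi_R(x)=R^{-d/2}\varphi(x/R)$ with $R\to\infty$, one has $\|\varphi_R\|_{**}^2=(2\pi)^{-d}\int m(\eta/R)|\widehat\varphi(\eta)|^2\,d\eta\to 0$ by dominated convergence, while $\|\varphi_R\|_*^2\ge(2\pi)^d\|\varphi\|_2^2$ stays bounded away from zero. So the finite measure of $\Omega$ is not cosmetic: it is exactly what rescues the low-frequency part, and your argument omits the ingredient that uses it.

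Concretely, the missing piece is a Poincar\'e-type inequality $\|\varphi\|_2\le c\,\|\varphi\|_{**}$ for $\varphi\in C^\infty_c(\Omega)$, valid because $|\Omega|<\infty$; this is \cite[Lemma 2.7]{FKV}, which the paper invokes. With it, the low frequencies are handled by $\int_{|\xi|\le M}\log(e+|\xi|)|\widehat\varphi(\xi)|^2\,d\xi\le \log(e+M)\,(2\pi)^d\|\varphi\|_2^2\le C\|\varphi\|_{**}^2$ (or, as in the paper, via $\|\widehat\varphi\|_\infty^2\le\|\varphi\|_1^2\le|\Omega|\,\|\varphi\|_2^2$), and your high-frequency asymptotics $m(\xi)=\omega_{d-1}\log|\xi|+O(1)$ — which is fine, and also gives the unconditional upper bound $m(\xi)\le C\log(e+|\xi|)$, hence $\|\varphi\|_{**}\le C\|\varphi\|_*$ — finishes the job. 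Note also that the resulting constants necessarily depend on $|\Omega|$, not only on $d$ as you claim. The paper takes a slightly different but related route: it quotes the exact identity $(\varphi,\varphi)_{log}=\kappa_d\|\varphi\|_{**}^2-\int_{\R^d}[j*\varphi]\varphi\,dx+\zeta_d\|\varphi\|_2^2$ from \cite{HW}, controls the extra terms by $\|\varphi\|_2^2$ using $\|\varphi\|_1^2\le|\Omega|\|\varphi\|_2^2$, and combines this with the estimates of Lemma~\ref{closed-semibounded} and the same \cite{FKV} inequality. Either way, the $L^2$-control by $\|\cdot\|_{**}$ on sets of finite measure is the step your proposal is missing.
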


\begin{proof}
Let $\varphi \in C^\infty_c(\Omega)$. By \cite[Lemma 2.7]{FKV}, we have 
\begin{equation}
  \label{eq:FKV-lemma}
\|\varphi\|_2 \le c_2 \|\varphi\|_{**}  \qquad \text{with a constant $c_2>0$ independent of $\varphi$.}  
\end{equation}
In particular, $\|\cdot\|_{**}$ defines a norm on $C^\infty_c(\Omega)$.
Next we note that, by \cite[Theorem 1.1(ii) and Eq. (3.1)]{HW},   
$$
(\varphi,\varphi)_{log} = \frac{1}{2}\int_{\R^d}[\loglap \varphi(x)]\varphi(x)\,dx = \kappa_d \|\varphi\|_{**}^2 - \int_{\Bbb R^d} [j * \varphi] \varphi \,dx + \zeta_d  \|\varphi\|_2^2
$$
with 
\begin{equation}
  \label{eq:def-zeta_d}
\kappa_d:= \frac{\pi^{- \frac{d}{2}}  \Gamma(d/2)}{4}, \qquad \zeta_d:= 
\log 2 + \frac{1}{2}\left(\psi\left(d/2\right) -\gamma\right)
\end{equation}
and 
$$
j: \R^d \setminus \{0\} \to \R, \qquad j(z)= 2 \kappa_d 1_{\R^d \setminus B_d}(z)|z|^{-d}.
$$
Here $\psi:= \frac{\Gamma'}{\Gamma}$ is the Digamma function and $\gamma= -\Gamma'(1)$ is the Euler-Mascheroni constant. 
Consequently, we have 
\begin{align}
\Bigl|(\varphi,\varphi)_{log}- \kappa_d \|\varphi\|_{**}^2\Bigr| &\le 
\|j\|_\infty \|\varphi\|_1^2 + \zeta_d  \|\varphi\|_2^2 \nonumber\\
&\le \Bigl(\|j\|_\infty |\Omega| +\zeta_d\Bigr)\|\varphi\|_2^2.  \label{modulus-ineq-quad-form}
\end{align}
As a consequence of (\ref{eq:basic-fourier-ineq}) and (\ref{modulus-ineq-quad-form}), we find that 
\begin{align*}
\|\varphi\|_{**}^2 &\le \frac{1}{\kappa_d}\Bigl[ (\varphi,\varphi)_{log} + 
\bigl(\|j\|_\infty |\Omega|+\zeta_d\bigr) \|\varphi\|_2^2 \Bigr]\\
&\le \frac{1}{(2\pi)^d \kappa_d}\Bigl(1  + \|j\|_\infty |\Omega|+\zeta_d\Bigr)\|\varphi\|_*^2.
\end{align*}
Moreover, by (\ref{closed-semibounded-est-1}), (\ref{eq:closed-semibounded-est-2}), (\ref{eq:FKV-lemma}) and (\ref{modulus-ineq-quad-form}) we have 
\begin{align*}
&\frac{\|\varphi\|_{*}^2}{c_1}  
\le (2\pi)^d \bigl(\|\varphi\|_{2}^2 + (\varphi,\varphi)_{log}\bigr) 
+   \bigl\| \ln |\cdot| \bigr\|_{L^1(B_2(0))} |\Omega| \|\varphi\|_2^2\\
&\le (2\pi)^d \Bigl( \kappa_d \|\varphi\|_{**}^2
+ \bigl(1+ \|j\|_\infty |\Omega| +\zeta_d\bigr)\|\varphi\|_2^2 \Bigr) 
+   \bigl\| \ln |\cdot| \bigr\|_{L^1(B_2(0))} |\Omega| \|\varphi\|_2^2\\
&\le c_3 \|\varphi\|_{**}^2
\end{align*}
with $c_3 = (2\pi)^d \kappa_d + c_2\bigl[(2\pi)^d \bigl(1+ \|j\|_\infty |\Omega|+\zeta_d\bigr) +\bigl\| \ln |\cdot| \bigr\|_{L^1(B_2(0))}|\Omega| \bigr]$. Hence the norms $\|\cdot\|_{*}$ and $\|\cdot\|_{**}$ are equivalent on $C^\infty_c(\Omega)$.
\end{proof}

\begin{cor}
\label{cor-compact-embedding}
Let $\Omega \subset \R^d$ be an open set of finite measure. Then the embedding $\cH(\Omega) \hookrightarrow L^2(\Omega)$ is compact. 
\end{cor}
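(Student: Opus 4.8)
The plan is to reduce the claim to the compactness of a known fractional-type embedding by exploiting the norm equivalence established in Lemma~\ref{equivalent-norms}. By that lemma, the norms $\|\cdot\|_*$ and $\|\cdot\|_{**}$ are equivalent on $C^\infty_c(\Omega)$, hence $\cH(\Omega)$ coincides (as a set, with equivalent norms) with the completion of $C^\infty_c(\Omega)$ with respect to $\|\cdot\|_{**}$. It therefore suffices to show that the embedding of this latter space into $L^2(\Omega)$ is compact. Since $\|\cdot\|_{**}$ is, up to the truncation $|x-y|\le 1$ in the integral, precisely the Gagliardo-type seminorm associated with the $H^{d/2}$-regularity encoded by $\loglap$, this is exactly the kind of statement covered by the compactness result for nonlocal Dirichlet forms on finite-measure sets.

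First I would invoke \cite[Theorem 1.2]{jarohs-weth}: for an open set $\Omega$ of finite measure, the space obtained as the $\|\cdot\|_{**}$-closure of $C^\infty_c(\Omega)$ embeds compactly into $L^2(\Omega)$. (If the cited theorem is phrased for a slightly different but comparable seminorm — e.g.\ the full Gagliardo seminorm of some order, or a kernel with a different cutoff — one first notes that on $C^\infty_c(\Omega)$ the relevant seminorms are mutually equivalent, using that the discrepancy between $\int\!\!\int_{|x-y|\le 1}$ and $\int\!\!\int_{\R^d\times\R^d}$ of $\frac{(\varphi(x)-\varphi(y))^2}{|x-y|^d}$ is controlled by $\|\varphi\|_2^2$ via $\int_{|z|\ge 1}|z|^{-d}\,dz \cdot$(a bounded factor), combined with the Poincaré-type bound \eqref{eq:FKV-lemma}.) Then I would translate this back: given a sequence $(\varphi_n)\subset \cH(\Omega)$ bounded in $\|\cdot\|_*$, it is bounded in $\|\cdot\|_{**}$ by Lemma~\ref{equivalent-norms}, hence has a subsequence converging in $L^2(\Omega)$, which is precisely the assertion of the corollary.

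As an alternative (and as the route flagged in the introduction for bounded $\Omega$), in the bounded case one can instead apply Pego's criterion~\cite{Pego}: a bounded family in $\cH(\Omega)$ is bounded in $L^2$ and has Fourier transforms that are equi-small at infinity, because $\int_{|\xi|\ge R}|\widehat{\varphi}(\xi)|^2\,d\xi \le \frac{1}{\log(e+R)}\|\varphi\|_*^2 \to 0$ uniformly as $R\to\infty$; together with equicontinuity/tightness in $x$-space (the latter from the support being contained in the bounded set $\Omega$), Pego's theorem yields precompactness in $L^2$. The main obstacle is simply matching the precise hypotheses and the precise seminorm in \cite[Theorem 1.2]{jarohs-weth} to the seminorm $\|\cdot\|_{**}$ used here; once that bookkeeping is done, the norm equivalence of Lemma~\ref{equivalent-norms} does all the remaining work, so the proof is short.
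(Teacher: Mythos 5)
Your argument is correct and follows essentially the same route as the paper: the paper also defines the auxiliary space of functions vanishing outside $\Omega$ with finite $\|\cdot\|_{**}$-norm, invokes \cite[Theorem 1.2]{jarohs-weth} for its compact embedding into $L^2(\Omega)$, and transfers compactness to $\cH(\Omega)$ via the norm equivalence of Lemma~\ref{equivalent-norms}. The only cosmetic difference is that the paper embeds $\cH(\Omega)$ into the full finiteness space $\tilde\cH(\Omega)$ rather than identifying it with the $\|\cdot\|_{**}$-closure of $C^\infty_c(\Omega)$, and it does not need your Pego-based alternative, which in any case only covers bounded $\Omega$.
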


\begin{proof}
Let $\tilde \cH(\Omega)$ be defined as the space of functions $\varphi \in L^2(\R^d)$ with $\varphi \equiv 0$ on $\R^d \setminus \Omega$ and 
$$
\int \!\!\! \int_{|x-y|\le 1} \frac{(\varphi(x)-\varphi(y))^2}{|x-y|^d}\,dxdy <\infty.
$$  
By \cite[Theorem 1.2]{jarohs-weth}, the Hilbert space $(\tilde \cH(\Omega),\|\cdot\|_{**})$ is compactly embedded in $L^2(\Omega)$. 
Since, by Lemma~\ref{equivalent-norms}, the norms $\|\cdot\|_*$ and $\|\cdot\|_{**}$ are equivalent on $C^\infty_c(\Omega)$, the space $\cH(\Omega)$ is embedded in $\tilde \cH(\Omega)$. Hence the claim follows.    
\end{proof}

\begin{cor}
\label{space-equivalence}
Let $\Omega \subset \R^d$ be a bounded open set with Lipschitz boundary.
\begin{enumerate}
\item[(i)] The space $\cH(\Omega)$ is equivalently given as the set of functions $\varphi \in L^2(\R^d)$ with $\varphi \equiv 0$ on $\R^d \setminus \Omega$ and 
  \begin{equation}
    \label{eq:kernel-finiteness-cond}
\int \!\!\! \int_{|x-y|\le 1} \frac{(\varphi(x)-\varphi(y))^2}{|x-y|^d}\,dxdy <\infty.
  \end{equation}
\item[(ii)] $\cH(\Omega)$ contains the characteristic function $1_\Omega$ of $\Omega$ and also the restrictions of exponentials $x \mapsto 1_\Omega(x) \, e^{ix \xi}$, $\xi \in \R^d$.
\end{enumerate}
\end{cor}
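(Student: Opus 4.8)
The plan is to deduce both assertions from the norm equivalence in Lemma~\ref{equivalent-norms} together with \cite[Theorem 1.2]{jarohs-weth}, which identifies the space $\tilde\cH(\Omega)$ of functions vanishing outside $\Omega$ with finite $\|\cdot\|_{**}$-norm as a Hilbert space compactly embedded in $L^2(\Omega)$. Since $\cH(\Omega)$ is by definition the $\|\cdot\|_*$-closure of $C^\infty_c(\Omega)$, and since on $C^\infty_c(\Omega)$ the norms $\|\cdot\|_*$ and $\|\cdot\|_{**}$ are comparable, $\cH(\Omega)$ is naturally identified with the $\|\cdot\|_{**}$-closure of $C^\infty_c(\Omega)$ inside $\tilde\cH(\Omega)$. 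Thus (i) amounts to the density of $C^\infty_c(\Omega)$ in $(\tilde\cH(\Omega),\|\cdot\|_{**})$, which is exactly the kind of statement one expects to hold for Lipschitz domains.

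For part (i), first I would note the trivial inclusion $\cH(\Omega)\subset\tilde\cH(\Omega)$ (which was already used in Corollary~\ref{cor-compact-embedding}). For the reverse inclusion I would invoke the approximation theory for the relevant fractional-order space with $|x-y|\le 1$ truncation: given $\varphi\in\tilde\cH(\Omega)$, one extends by zero, uses that $\Omega$ has Lipschitz boundary to perform an interior-dilation $\varphi_t(x)=\varphi(x/t)$ with $t<1$ (or $t>1$, depending on orientation) so that $\varphi_t$ is supported in a compact subset of $\Omega$, checks $\|\varphi_t-\varphi\|_{**}\to 0$, and then mollifies $\varphi_t$ to land in $C^\infty_c(\Omega)$. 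The continuity of translations/dilations in the $\|\cdot\|_{**}$-norm and the fact that the truncated Gagliardo seminorm behaves like the $H^{1/2}$-type seminorm make this routine; the one place Lipschitz regularity is essential is in guaranteeing that scaling pushes the support strictly inside $\Omega$ without blowing up the seminorm — this is the analogue of the standard fact that $H^{1/2}_0(\Omega)=\widetilde H^{1/2}(\Omega)$ for Lipschitz $\Omega$. I expect this density step to be the main obstacle, in the sense that it is the only non-formal ingredient; it can either be cited from the literature on fractional Sobolev spaces (e.g.\ the references in \cite{FKV} or \cite{jarohs-weth}) or sketched via the dilation-plus-mollification argument above.

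For part (ii), by (i) it suffices to verify that $1_\Omega$ and $x\mapsto 1_\Omega(x)e^{ix\xi}$ satisfy the finiteness condition \eqref{eq:kernel-finiteness-cond}. For $u=1_\Omega$ the difference $u(x)-u(y)$ is nonzero only when exactly one of $x,y$ lies in $\Omega$, so
\[
\|1_\Omega\|_{**}^2 = 2\int_{\Omega}\int_{\R^d\setminus\Omega,\ |x-y|\le 1}\frac{dy\,dx}{|x-y|^d}\le 2\int_{\Omega}\Bigl(\int_{|z|\le 1,\ \mathrm{dist}(x+z,\Omega^c)\le \cdots}\frac{dz}{|z|^d}\Bigr)dx,
\]
and the inner integral is controlled, using the Lipschitz (in fact, measure-density / exterior-cone) property of $\partial\Omega$, by $C\log\frac{1}{\mathrm{dist}(x,\partial\Omega)}$ near the boundary and by a constant away from it; since $|\Omega|<\infty$ and $\int_{\Omega}\log\frac{1}{\mathrm{dist}(x,\partial\Omega)}\,dx<\infty$ for Lipschitz $\Omega$ (the boundary has finite $(d-1)$-measure), the integral converges. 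For $v(x)=1_\Omega(x)e^{ix\xi}$ I would split $v(x)-v(y)=(1_\Omega(x)-1_\Omega(y))e^{ix\xi}+1_\Omega(y)(e^{ix\xi}-e^{iy\xi})$; the first term contributes exactly $\|1_\Omega\|_{**}^2$ as above, while the second is bounded by $|\xi|\,|x-y|$ on $|x-y|\le 1$, so
\[
\int\!\!\!\int_{|x-y|\le 1}\frac{1_\Omega(y)|e^{ix\xi}-e^{iy\xi}|^2}{|x-y|^d}\,dx\,dy \le |\xi|^2\int_{\Omega}\int_{|z|\le 1}|z|^{2-d}\,dz\,dx = C_d|\xi|^2|\Omega|<\infty.
\]
Combining these two bounds shows $v\in\tilde\cH(\Omega)=\cH(\Omega)$, completing (ii).
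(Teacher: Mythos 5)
Your proposal follows essentially the same route as the paper: part (i) is reduced to the density of $C^\infty_c(\Omega)$ in $(\tilde\cH(\Omega),\|\cdot\|_{**})$ combined with the norm equivalence of Lemma~\ref{equivalent-norms}, and part (ii) — which the paper dismisses as ``a straightforward computation'' — is the finiteness check you carry out correctly via the $\log\frac{1}{\mathrm{dist}(x,\partial\Omega)}$ bound for $1_\Omega$ and the Lipschitz estimate $|e^{ix\xi}-e^{iy\xi}|\le|\xi||x-y|$ for the exponentials. The paper settles the density step by citing \cite[Theorem 3.1]{HW}, which is the citation option you offer; be aware, though, that your self-contained sketch as written does not quite work, since a global dilation $\varphi_t(x)=\varphi(x/t)$ pushes the support into $\Omega$ only for domains star-shaped about the origin — for a general bounded Lipschitz domain one must localize with a partition of unity over boundary charts and translate inward in the graph direction before mollifying, which is exactly what the cited result encapsulates.
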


\begin{proof}
(i) Let, as in the proof of Corollary~\ref{cor-compact-embedding}, $\tilde \cH(\Omega)$ be the space of functions $\varphi \in L^2(\R^d)$ with $\varphi \equiv 0$ on $\R^d \setminus \Omega$ and with (\ref{eq:kernel-finiteness-cond}), endowed with the norm $\|\cdot\|_{**}$. Since $\Omega \subset \R^d$ be a bounded open set with Lipschitz boundary, it follows from \cite[Theorem 3.1]{HW} that $C_0^\infty(\Omega) \subset \tilde \cH(\Omega)$ is dense. Hence the claim follows from Lemma~\ref{equivalent-norms}.

(ii) follows from (i) and a straightforward computation.   
\end{proof}

Next we note an observation regarding the scaling properties of the eigenvalues $\lambda_k(\Omega)$. 
\begin{lem}
\label{lemma-scaling-properties}  
Let $\Omega \subset \R^d$ be a bounded open set with Lipschitz boundary, and let 
$$
R\Omega:= \{R x\::\: x \in \Omega\}. 
$$
Then we have
$$
\lambda_k(R \Omega) = \lambda_k(\Omega) -  \log R \qquad \text{for all $k \in \N$.}
$$
\end{lem}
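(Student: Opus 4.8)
The plan is to exploit the homogeneity of the Fourier symbol $2\log|\xi|$ under dilations, which turns the multiplicative scaling $x \mapsto Rx$ into an additive shift of the quadratic form by $-\log R$. Concretely, for $\varphi \in C^\infty_c(\Omega)$ set $\varphi_R(x):= \varphi(x/R)$, which belongs to $C^\infty_c(R\Omega)$, and recall that $\widehat{\varphi_R}(\xi) = R^d \widehat{\varphi}(R\xi)$. Substituting into the definition of $(\cdot,\cdot)_{log}$ and making the change of variables $\eta = R\xi$ gives
\begin{align*}
(\varphi_R,\varphi_R)_{log}
&= \frac{1}{(2\pi)^d}\int_{\R^d} \log(|\xi|)\, R^{2d} |\widehat\varphi(R\xi)|^2 \, d\xi
= \frac{R^{d}}{(2\pi)^d}\int_{\R^d} \log\Bigl(\frac{|\eta|}{R}\Bigr)\, |\widehat\varphi(\eta)|^2 \, d\eta \\
&= R^{d}\Bigl[(\varphi,\varphi)_{log} - (\log R)\, \|\varphi\|_2^2\Bigr],
\end{align*}
and likewise $\|\varphi_R\|_2^2 = R^{d}\|\varphi\|_2^2$. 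Hence the Rayleigh quotient satisfies
$$
\frac{(\varphi_R,\varphi_R)_{log}}{\|\varphi_R\|_2^2} = \frac{(\varphi,\varphi)_{log}}{\|\varphi\|_2^2} - \log R.
$$

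Next I would pass from $C^\infty_c$ to the full form domain and invoke the min-max characterization of the eigenvalues. Since $\Omega$ has Lipschitz boundary, Corollary~\ref{space-equivalence}(i) (applied to both $\Omega$ and $R\Omega$, which is again bounded Lipschitz) identifies $\cH(\Omega)$ and $\cH(R\Omega)$ with the corresponding kernel spaces, and the map $\varphi \mapsto \varphi_R$ is a bijection between them; by density of $C^\infty_c$ and continuity of both sides in $\|\cdot\|_*$ (equivalently $\|\cdot\|_{**}$), the displayed identity for the Rayleigh quotient extends to all nonzero $\varphi \in \cH(\Omega)$. In particular $\varphi \mapsto \varphi_R$ carries any $k$-dimensional subspace of $\cH(\Omega)$ to a $k$-dimensional subspace of $\cH(R\Omega)$ on which the Rayleigh quotient is uniformly shifted by $-\log R$. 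Applying the Courant–Fischer formula
$$
\lambda_k(\Omega) = \min_{\substack{V \subset \cH(\Omega) \\ \dim V = k}}\ \max_{0 \neq \varphi \in V} \frac{(\varphi,\varphi)_{log}}{\|\varphi\|_2^2}
$$
to both domains then yields $\lambda_k(R\Omega) = \lambda_k(\Omega) - \log R$ directly, since the map $V \mapsto \{\varphi_R : \varphi \in V\}$ is a bijection between the admissible $k$-dimensional subspaces that shifts every max by exactly $-\log R$.

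I do not anticipate a serious obstacle here; the argument is essentially bookkeeping once the scaling identity for the symbol is observed. The one point that needs a little care is ensuring that the change of variables in the Fourier integral is legitimate and that the identity extends from $C^\infty_c$ to the closure — for this one uses that $(\varphi,\varphi)_{log} + C\|\varphi\|_2^2$ is comparable to $\|\varphi\|_*^2$ (as established in Lemma~\ref{closed-semibounded}, inequality (\ref{intermediate-est}) and (\ref{eq:basic-fourier-ineq})) uniformly on the relevant scale, so that both sides of the Rayleigh-quotient identity are continuous with respect to the $\|\cdot\|_*$-topology and the density of $C^\infty_c$ in $\cH(\Omega)$ closes the argument. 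A second minor point is that $R\Omega$ inherits the Lipschitz property from $\Omega$, so Corollary~\ref{space-equivalence} applies on both sides; this is immediate.
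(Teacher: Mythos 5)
Your proposal is correct and follows essentially the same route as the paper: the key identity is the Fourier-side scaling computation showing the form shifts by $-\log R$ under dilation, combined with density of $C^\infty_c$ in the form domain. The only cosmetic difference is that the paper uses the $L^2$-normalized dilation $\varphi_R(x)=R^{-d/2}\varphi(x/R)$, so the eigenvalue shift is read off directly without passing through Rayleigh quotients and Courant--Fischer, whereas you use the unnormalized dilation and min--max; both arguments are sound.
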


\begin{proof}
Since $C_0^\infty(\Omega) \subset \cH(\Omega)$ is dense, it suffices to note that
\begin{equation}
  \label{eq:scaling-test-functions}
(\varphi_R,\varphi_R)_{log} = (\varphi,\varphi)_{log} - \log R \|\varphi\|_{L^2(\R^d)}^2 \qquad \text{for $\varphi \in C^\infty_c(\R^d)$}
\end{equation}
with $\varphi_R \in C^\infty_c(\R^d)$ defined by $\varphi_R(x)= R^{-\frac{d}{2}}\varphi(\frac{x}{R})$, whereas $\|\varphi_R\|_{L^2(\R^d)}= \|\varphi\|_{L^2(\R^d)}$. Since 
$$
\widehat{\varphi_R}= R^{\frac{d}{2}} \widehat{\varphi}(R \,\cdot \,)
$$
we have 
\begin{align*}
&(\varphi_R,\varphi_R)_{log}\\
&= \frac{1}{(2\pi)^{d}} \, \int_{\Bbb R^d} \log(|\xi|)\,  |\widehat{\varphi_R}(\xi)|^2\, d\xi = \frac{R^{d}}{(2\pi)^{d}} \, \int_{\Bbb R^d} \log(|\xi|)\,  |\widehat{\varphi}(R \xi)|^2\, d\xi\\
&= \frac{1}{(2\pi)^{d}} \, \int_{\Bbb R^d} \bigl(\log(|\xi|)-\log R\bigr)\,  |\widehat{\varphi}(\xi)|^2\, d\xi =(\varphi,\varphi)_{log} - \log R \|\varphi\|_{L^2(\R^d)}^2,
\end{align*}
as stated in (\ref{eq:scaling-test-functions}).\\ 
\end{proof}
 
\section{An upper trace bound}
\label{sec:deriving-an-upper-1}

Throughout this section, we let $\Omega \subset \R^d$ denote an open set of finite measure. Let $\{\varphi_k\}$ and $\{\lambda_k\}$ be the orthonormal in $L^2(\Omega)$ system of eigenfunctions and the eigenvalues of the operator $\mathcal H$  respectively. In what follows we denote
$$
(\lambda - t)_+ = 
\begin{cases}
\lambda - t, & {\rm if} \quad t <\lambda, \\
0, \quad & {\rm if} \quad  t \ge \lambda.
\end{cases}
$$
Then we have
\begin {thm}\label{1.1}
For the eigenvalues of the problem \eqref{D} and any $\lambda\in \Bbb R$ we have
\begin{equation}\label{BU}
\sum_{k}(\lambda - \lambda_k)_+ \le \frac{1}{(2\pi)^{d}}\, |\Omega|\, e^{d\lambda} \, |B_d|\, d^{-1},
\end{equation}
where $|B_d|$ is the measure of the unit ball in $\Bbb R^d$.
\end{thm}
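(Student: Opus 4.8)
The plan is to use the standard Berezin--Li--Yau type argument adapted to the logarithmic Laplacian, in the spirit of \cite{Lap}. The key point is that the quadratic form $(\cdot,\cdot)_{log}$ has Fourier symbol $\log|\xi|$, so one can diagonalize it on $\R^d$ by the Fourier transform and then estimate the contribution of the eigenfunctions extended by zero outside $\Omega$.

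First I would write $\sum_k (\lambda-\lambda_k)_+ = \sum_{\lambda_k<\lambda}(\lambda - \lambda_k)$ and use the variational characterization $\lambda_k = (\varphi_k,\varphi_k)_{log}$, where $\{\varphi_k\}$ is the $L^2(\Omega)$-orthonormal system of eigenfunctions, extended by zero to $\R^d$. By Plancherel, $(\varphi_k,\varphi_k)_{log} = \frac{1}{(2\pi)^d}\int_{\R^d}\log|\xi|\,|\widehat{\varphi_k}(\xi)|^2\,d\xi$ and $\int_\Omega |\varphi_k|^2 = \frac{1}{(2\pi)^d}\int_{\R^d}|\widehat{\varphi_k}(\xi)|^2\,d\xi = 1$. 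Hence for each $k$ with $\lambda_k<\lambda$,
\[
\lambda - \lambda_k = \frac{1}{(2\pi)^d}\int_{\R^d}\bigl(\lambda - \log|\xi|\bigr)|\widehat{\varphi_k}(\xi)|^2\,d\xi \le \frac{1}{(2\pi)^d}\int_{\R^d}\bigl(\lambda - \log|\xi|\bigr)_+|\widehat{\varphi_k}(\xi)|^2\,d\xi.
\]
This bound is valid for all $k$ (the left side is $\le 0$ when $\lambda_k \ge \lambda$, and the right side is always $\ge 0$), so summing over all $k$ gives
\[
\sum_k(\lambda-\lambda_k)_+ \le \frac{1}{(2\pi)^d}\int_{\R^d}\bigl(\lambda-\log|\xi|\bigr)_+\Bigl(\sum_k |\widehat{\varphi_k}(\xi)|^2\Bigr)\,d\xi.
\]

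Next I would estimate the spectral sum $\sum_k |\widehat{\varphi_k}(\xi)|^2$ pointwise. Since $\widehat{\varphi_k}(\xi) = \int_\Omega e^{-ix\xi}\varphi_k(x)\,dx = (e_\xi,\varphi_k)_{L^2(\Omega)}$ where $e_\xi(x) = 1_\Omega(x)e^{ix\xi}$, and since $\{\varphi_k\}$ is an orthonormal basis of $L^2(\Omega)$, Bessel's inequality yields
\[
\sum_k |\widehat{\varphi_k}(\xi)|^2 = \sum_k |(e_\xi,\varphi_k)_{L^2(\Omega)}|^2 \le \|e_\xi\|_{L^2(\Omega)}^2 = |\Omega|.
\]
(Here we do not even need $e_\xi \in \cH(\Omega)$, only $e_\xi \in L^2(\Omega)$, so no boundary regularity is required.) Substituting this in and computing the remaining integral in polar coordinates,
\[
\sum_k(\lambda-\lambda_k)_+ \le \frac{|\Omega|}{(2\pi)^d}\int_{\R^d}\bigl(\lambda - \log|\xi|\bigr)_+\,d\xi = \frac{|\Omega|}{(2\pi)^d}\, d\,|B_d|\int_0^{e^\lambda} r^{d-1}(\lambda - \log r)\,dr,
\]
where I used that $\lambda - \log r > 0 \iff r < e^\lambda$ and that $|\{|\xi|\le R\}| = |B_d|R^d$. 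The substitution $r = e^\lambda t$ turns the integral into $e^{d\lambda}\int_0^1 t^{d-1}(-\log t)\,dt = e^{d\lambda}/d^2$, giving exactly $\frac{1}{(2\pi)^d}|\Omega|\,e^{d\lambda}\,|B_d|\,d^{-1}$, which is \eqref{BU}.

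The only genuinely delicate point is the justification that $\widehat{\varphi_k}(\xi) = (e_\xi, \varphi_k)_{L^2(\Omega)}$ and the use of Bessel's inequality term by term; this is routine because each $\varphi_k \in L^2(\Omega)$ with $|\Omega| < \infty$, so $\varphi_k \in L^1(\R^d)$ and the Fourier integral converges absolutely, and the spectral-sum bound is then just Bessel's inequality for the fixed vector $e_\xi \in L^2(\Omega)$. I would also remark that exchanging the sum over $k$ with the $\xi$-integral is legitimate by monotone convergence since every summand $(\lambda-\log|\xi|)_+|\widehat{\varphi_k}(\xi)|^2$ is nonnegative. Thus there is no real obstacle — the argument is a clean transcription of the Berezin method, the symbol $|\xi|^2$ of $-\Delta$ being replaced by $\log|\xi|$ and the level set $\{|\xi|^2<\lambda\}$ by $\{\log|\xi|<\lambda\} = B_{e^\lambda}(0)$.
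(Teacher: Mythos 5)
Your argument is correct and is essentially the paper's own proof (a Berezin--Li--Yau type bound): Fourier diagonalization of the symbol $\log|\xi|$, the term-by-term estimate $(\lambda-\lambda_k)_+\le \frac{1}{(2\pi)^d}\int(\lambda-\log|\xi|)_+|\widehat{\varphi_k}|^2\,d\xi$, the bound $\sum_k|\widehat{\varphi_k}(\xi)|^2\le\|e_\xi\|_{L^2(\Omega)}^2=|\Omega|$, and the explicit polar-coordinate computation giving $e^{d\lambda}|B_d|/d$. Your added care (Bessel rather than completeness, Tonelli for the interchange) only tightens the write-up; no changes are needed.
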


\begin{proof}
Extending the eigenfunction $\varphi_k$ by zero outside $\Omega$ and using the Fourier transform we find
\begin{multline*}
\sum_{k}(\lambda - \lambda_k)_+ = \sum_{k}\left(\lambda (\varphi_k, \varphi_k) - (\mathcal H\varphi_k, \varphi_k) \right)_+ \\
= \frac{1}{(2\pi)^{d}}\, \left(\sum_k \int_{\Bbb R^d} \left(\lambda - \log(|\xi|) \right) \, |\widehat{\varphi_k}(\xi)|^2 \, d\xi \right)_+\\
\le
 \frac{1}{(2\pi)^{d}}\, \int_{\Bbb R^d} \left(\lambda - \log(|\xi|) \right)_+ \,  \sum_k |\widehat{\varphi_k}(\xi)|^2 \, d\xi. 
\end{multline*} 
Using that $\{\varphi_k\}$ is an orthonormal basis in $L^2(\Omega)$ and denoting 
$e_\xi = e^{-i (\cdot,\xi)}$we have 
$$
\sum_k |\widehat{\varphi_k}(\xi)|^2 = \sum_k |(e_\xi, \varphi_k)|^2 = \|e_\xi\|^2_{L^2(\Omega)} = |\Omega|,
$$
and finally obtain
\begin{align*}
\sum_{k}(\lambda - \lambda_k)_+ & \le  \frac{1}{(2\pi)^{d}}\, |\Omega|\, \int_{\Bbb R^d} \left(\lambda - \log(|\xi|) \right)_+ \\
& =  \frac{1}{(2\pi)^{d}}\, |\Omega|\, e^{d\lambda} \,  \int_{|\xi|\le 1} \log(|\xi|^{-1}) \, d\xi.
\end{align*}
We complete the proof by computing the last integral. 
\end{proof}

\noindent
Let $\eta >\lambda$ and let us consider the function
$$
\psi_\lambda(t) = \frac{1}{\eta - \lambda} (\eta - t)_+.
$$
Denote by $\chi$ the step function 
$$
\chi_\lambda (t) = 
\begin{cases} 
1,  \quad & {\rm if} \quad t<\lambda,\\
0,\quad & {\rm if} \quad t \ge \lambda,
\end{cases}
$$
and let  
$$
N(\lambda) = \# \{k:\, \lambda_k<\lambda\},
$$
be the number of the eigenvalues below $\lambda$ of the operator $\mathcal H$.

Then by using the previous statement we have 
$$
N(\lambda) \le \frac{1}{\eta - \lambda} \, \sum_k (\eta - \lambda_k)_+ \le 
\frac{1}{\eta - \lambda} \,  \frac{1}{(2\pi)^{d}}\, |\Omega|\, e^{d\eta} \,  |B_d|\, d^{-1}.
$$
Minimising the right hand side w.r.t. $\eta$ we find $\eta = \lambda + \frac1d
$ and thus obtain the following

\begin{cor}
\label{cor-N-lambda}
For the number $N(\lambda)$ of the eigenvalues of the operator $\mathcal H$ below $\lambda$ we have
\begin{equation}\label{Numb}
N(\lambda) \le 
 e^{\lambda d +1} 
\frac{1}{(2\pi)^{d}}\, |\Omega| \, |B_d|.
\end{equation}
\end{cor}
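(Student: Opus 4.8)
The plan is to derive Corollary~\ref{cor-N-lambda} directly from Theorem~\ref{1.1} via a standard Chebyshev-type (layer-cake) argument, which is already sketched in the paragraph following the proof of Theorem~\ref{1.1}. The key observation is that for any $\eta > \lambda$ one has the pointwise inequality $\chi_\lambda(t) \le \psi_\lambda(t) = \frac{1}{\eta-\lambda}(\eta - t)_+$ on $\R$: indeed, if $t \ge \lambda$ then $\chi_\lambda(t)=0 \le \psi_\lambda(t)$, while if $t < \lambda < \eta$ then $\psi_\lambda(t) = \frac{\eta - t}{\eta - \lambda} > 1 = \chi_\lambda(t)$. Applying this with $t = \lambda_k$ and summing over $k$ gives
\[
N(\lambda) = \sum_k \chi_\lambda(\lambda_k) \le \frac{1}{\eta-\lambda}\sum_k (\eta - \lambda_k)_+.
\]

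Next I would invoke Theorem~\ref{1.1} with $\eta$ in place of $\lambda$ to bound the right-hand side:
\[
N(\lambda) \le \frac{1}{\eta-\lambda}\cdot \frac{1}{(2\pi)^d}\,|\Omega|\, e^{d\eta}\,|B_d|\, d^{-1}.
\]
The final step is to optimize the free parameter $\eta > \lambda$. Writing $\eta = \lambda + s$ with $s>0$, the bound becomes $\frac{e^{d\lambda}}{(2\pi)^d}|\Omega||B_d|d^{-1}\cdot \frac{e^{ds}}{s}$, and minimizing $s \mapsto \frac{e^{ds}}{s}$ over $s>0$ gives $s = \frac1d$, at which point $\frac{e^{ds}}{s} = d\,e$. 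Substituting back yields
\[
N(\lambda) \le \frac{e^{d\lambda}}{(2\pi)^d}\,|\Omega|\,|B_d|\,d^{-1}\cdot d\, e = e^{d\lambda + 1}\,\frac{1}{(2\pi)^d}\,|\Omega|\,|B_d|,
\]
which is exactly \eqref{Numb}.

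There is no real obstacle here — the argument is entirely elementary once Theorem~\ref{1.1} is in hand; the only point requiring a line of care is the verification of the pointwise bound $\chi_\lambda \le \psi_\lambda$ and the elementary one-variable minimization of $e^{ds}/s$. One could also remark that this Chebyshev trick is lossy (it is the standard price for passing from Riesz means to eigenvalue counts), so the constant $e$ in \eqref{Numb} is not expected to be sharp even though the bound \eqref{BU} for the Riesz mean is.
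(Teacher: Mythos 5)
Your proposal is correct and follows exactly the paper's own argument: the paper also bounds $N(\lambda) \le \frac{1}{\eta-\lambda}\sum_k(\eta-\lambda_k)_+$ via the same comparison of the step function with $\psi_\lambda$, applies Theorem~\ref{1.1} at level $\eta$, and minimizes over $\eta$ to find $\eta = \lambda + \frac{1}{d}$, yielding \eqref{Numb}. Your closing remark about the loss of sharpness from the Chebyshev step is a fair observation but not needed for the proof.
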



\section{A lower bound for $\lambda_1(\Omega)$}
\label{sec:lower-bound-lambd}

In this section, we focus on lower bounds for the first eigenvalue $\lambda_1= \lambda_1(\Omega)$. From Corollary~\ref{cor-N-lambda}, we readily deduce the following bound. 

\begin{thm} 
\label{lower-bound-lambda_1-first}
Let $\Omega \subset \R^d$ be an open set of finite measure. Then we have 
\begin{equation}
  \label{eq:est-lambda_1-first}
\lambda_1(\Omega) \ge \frac{1}{d} \log \frac{(2\pi)^{d}}{e |\Omega| \, |B_d|}.
\end{equation}
In particular, if $|\Omega| \le \frac{(2\pi)^{d}}{e\, |B_d|}$, then the operator $\mathcal H$ does not have negative eigenvalues.
\end{thm}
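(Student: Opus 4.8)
The plan is to derive \eqref{eq:est-lambda_1-first} directly from the eigenvalue count bound in Corollary~\ref{cor-N-lambda}. The key observation is that if $\lambda > \lambda_1(\Omega)$, then by definition $N(\lambda) = \#\{k : \lambda_k < \lambda\} \ge 1$, so \eqref{Numb} forces
\[
1 \le N(\lambda) \le e^{\lambda d + 1}\,\frac{1}{(2\pi)^d}\,|\Omega|\,|B_d|.
\]
Taking logarithms and rearranging gives $\lambda \ge \frac{1}{d}\log\frac{(2\pi)^d}{e\,|\Omega|\,|B_d|}$ for every $\lambda > \lambda_1(\Omega)$. Letting $\lambda \downarrow \lambda_1(\Omega)$ then yields the claimed inequality. (Alternatively, and a touch more cleanly, one can argue by contradiction: if $\lambda_1(\Omega) < \frac{1}{d}\log\frac{(2\pi)^d}{e\,|\Omega|\,|B_d|}$, pick $\lambda$ strictly between these two values; then $N(\lambda) \ge 1$ since $\lambda > \lambda_1$, but $e^{\lambda d + 1}\frac{1}{(2\pi)^d}|\Omega||B_d| < 1$, contradicting \eqref{Numb} — note $N(\lambda)$ is an integer.)

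For the second assertion, I would simply specialize: if $|\Omega| \le \frac{(2\pi)^d}{e\,|B_d|}$, then $\frac{(2\pi)^d}{e\,|\Omega|\,|B_d|} \ge 1$, so the right-hand side of \eqref{eq:est-lambda_1-first} is $\ge 0$, whence $\lambda_1(\Omega) \ge 0$. Since $\lambda_1(\Omega)$ is the smallest eigenvalue and the spectrum is discrete and bounded below (as recalled in the introduction), this means $\mathcal H$ has no negative eigenvalues.

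I do not anticipate any genuine obstacle here: the statement is an immediate corollary of the already-established counting bound, and the only mild subtlety is the passage from the strict inequality $\lambda > \lambda_1$ to the limiting inequality at $\lambda = \lambda_1$, which is handled either by continuity of the bound in $\lambda$ or by exploiting integrality of $N(\lambda)$ in the contradiction argument. One should also take care that the logarithm is applied in the correct direction (the inequality $N(\lambda)\ge 1$ bounds $\lambda$ from below, not above), but this is routine.
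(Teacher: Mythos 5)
Your argument is correct and is essentially the paper's own proof in contrapositive form: the paper notes that $\lambda < \frac{1}{d}\log\frac{(2\pi)^d}{e|\Omega||B_d|}$ forces $N(\lambda)<1$, hence $N(\lambda)=0$, which is the same immediate deduction from Corollary~\ref{cor-N-lambda} that you make via $N(\lambda)\ge 1$ for $\lambda>\lambda_1(\Omega)$. No gap; the second assertion also matches.
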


\begin{proof}
If $\lambda < \frac{1}{d} \log \frac{(2\pi)^{d}}{e |\Omega| \, |B_d|}$, then 
$N(\lambda)<1$ by (\ref{Numb}), and therefore $N(\lambda)=0$. Consequently, $\mathcal H$ does not have eigenvalues below $\frac{1}{d} \log \frac{(2\pi)^{d}}{e |\Omega| \, |B_d|}$.
\end{proof}

\begin{rem}
Note that the inequalities \eqref{BU}, \eqref{Numb} and \eqref{eq:est-lambda_1-first} hold for any open set $\Omega$ of finite measure without any additional conditions on its boundary.
\end{rem}


In the following, we wish to improve the bound given in Theorem~\ref{lower-bound-lambda_1-first} in low dimensions $d$ for open boundary sets with Lipschitz boundary. We shall use the following Faber-Krahn type inequality.

\begin{thm} (\cite[Corollary 1.6]{HW})\\
\label{sec:faber-Krahn-main}
Let $\rho>0$. Among all bounded open sets $\Omega$ with Lipschitz boundary and $|\Omega| = \rho$, the ball $B=B_r(0)$ with $|B|=\rho$ minimizes $\lambda_1(\Omega)$.
\end{thm}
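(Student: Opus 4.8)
The statement is a Faber–Krahn inequality for the logarithmic Laplacian, attributed to Chen–Weth. The plan is to follow the classical Faber–Krahn strategy adapted to the nonlocal quadratic form $(\cdot,\cdot)_{log}$, using the representation established in the proof of Lemma~\ref{equivalent-norms}:
\[
(\varphi,\varphi)_{log} = \kappa_d \|\varphi\|_{**}^2 - \int_{\R^d} [j*\varphi]\varphi\,dx + \zeta_d\|\varphi\|_2^2,
\]
together with the variational characterization $\lambda_1(\Omega) = \inf\{(\varphi,\varphi)_{log} : \varphi \in \cH(\Omega),\ \|\varphi\|_2 = 1\}$. Since $\zeta_d\|\varphi\|_2^2$ is invariant under measure-preserving rearrangement, the whole problem reduces to showing that the \emph{symmetric decreasing rearrangement} $\varphi^*$ of a nonnegative minimizer $\varphi$ does not increase the functional $\kappa_d\|\varphi\|_{**}^2 - \int [j*\varphi]\varphi$.

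The key steps, in order, are as follows. First I would reduce to nonnegative test functions: replacing $\varphi$ by $|\varphi|$ does not increase $\|\varphi\|_{**}^2$ (the map $t\mapsto |t|$ is $1$-Lipschitz, so $(|\varphi|(x)-|\varphi|(y))^2 \le (\varphi(x)-\varphi(y))^2$ pointwise) and does not change $\|\varphi\|_2$, while it can only increase $\int [j*\varphi]\varphi$ since $j \ge 0$. Second, I would apply the Riesz rearrangement inequality to the convolution term: because $j$ is a nonnegative, radially symmetric, nonincreasing kernel, $\int [j*\varphi]\varphi\,dx \le \int [j*\varphi^*]\varphi^*\,dx$. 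Third — the main point — I would invoke the Pólya–Szegő-type inequality for the Gagliardo-type seminorm: $\|\varphi^*\|_{**} \le \|\varphi\|_{**}$. One convenient route is to write, for each fixed $h$ with $|h|\le 1$, the quantity $\int (\varphi(x+h)-\varphi(x))^2\,dx$ and use that rearrangement decreases such "nonlocal Dirichlet" integrals; alternatively one uses the layer-cake / coarea representation of $\|\varphi\|_{**}^2$ in terms of $\int\!\!\int 1_{\{\varphi>t\}}$-type expressions combined with Riesz rearrangement, exactly as in the standard proof of the fractional Faber–Krahn inequality (Almgren–Lieb, Frank–Seiringer). Fourth, I would note that $\varphi^* \in \cH(B)$ where $B$ is the ball centered at the origin with $|B| = |\Omega|$, using Corollary~\ref{space-equivalence}(i) to identify $\cH(B)$ with the space of $L^2$ functions supported in $B$ having finite $\|\cdot\|_{**}$; combining the three inequalities gives $(\varphi^*,\varphi^*)_{log} \le (\varphi,\varphi)_{log}$, hence $\lambda_1(B) \le \lambda_1(\Omega)$.

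The main obstacle is the Pólya–Szegő inequality for the truncated Gagliardo seminorm $\|\cdot\|_{**}$, whose kernel $|x-y|^{-d}1_{\{|x-y|\le 1\}}$ is \emph{not} a globally monotone convolution kernel on $\R^d$ because of the cutoff at $|x-y|=1$; one cannot directly apply Riesz's inequality to $\int\!\!\int \varphi(x)\varphi(y)|x-y|^{-d}1_{\{|x-y|\le1\}}$ the way one would for the pure Riesz kernel. The clean fix is to avoid the truncated form altogether and argue instead at the level of the \emph{full} logarithmic form: using the identity from \cite{HW} that expresses $(\varphi,\varphi)_{log}$ (up to the $\zeta_d\|\varphi\|_2^2$ term) via the singular integral $\frac{\kappa_d}{2}\int\!\!\int \frac{(\varphi(x)-\varphi(y))^2}{|x-y|^d}\,\big(\text{with the correct log-regularization}\big)$, i.e. working with the kernel representation of $\loglap$ directly, for which the standard two-point rearrangement inequality $\int\!\!\int (\varphi^*(x)-\varphi^*(y))^2 K(|x-y|)\,dxdy \le \int\!\!\int (\varphi(x)-\varphi(y))^2 K(|x-y|)\,dxdy$ holds for any nonnegative decreasing $K$ (here effectively $K(r) \sim r^{-d}$ near zero), a result that does extend to the logarithmically-weighted kernel. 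Since the present paper only needs the statement as a citation from \cite{HW}, I would ultimately just cite \cite[Corollary 1.6]{HW} for the rearrangement step; the sketch above indicates the proof there.
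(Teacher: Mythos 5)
Note first that the paper does not prove this theorem at all: it is quoted verbatim from \cite[Corollary 1.6]{HW}, and the proof there is \emph{not} a direct rearrangement argument for the logarithmic form. It is obtained by passing to the limit $s\to 0^+$ in the fractional Faber--Krahn inequality: for each $s$ the Gagliardo kernel $|x-y|^{-d-2s}$ is symmetric decreasing, so P\'olya--Szeg\H{o}/Riesz applies and gives $\lambda_1^{2s}(\Omega)\ge\lambda_1^{2s}(B)$, and then one uses the eigenvalue convergence $\lambda_1(\Omega)=\lim_{s\to0^+}\frac{\lambda_1^{2s}(\Omega)-1}{s}$ (the same characterization from \cite[Theorem 1.5]{HW} that the present paper invokes in Remark~\ref{rem-comparison-of-other-bounds}) to pass the inequality to the logarithmic level via difference quotients. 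Your final fallback of simply citing \cite{HW} therefore coincides with what the paper does, but the direct rearrangement sketch you offer in its place contains a genuine error and the proposed fix does not work.

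The concrete gap is your second step. The kernel $j(z)=2\kappa_d\,1_{\R^d\setminus B_d}(z)|z|^{-d}$ vanishes on the unit ball and is positive outside it, so it is \emph{not} radially nonincreasing; Riesz's rearrangement inequality only compares $\int[j*\varphi]\varphi$ with the integral against $j^*\neq j$, and the inequality you actually need, $\int[j*\varphi]\varphi\,dx\le\int[j*\varphi^*]\varphi^*\,dx$, is false in general: take $\varphi$ the indicator of two small balls at mutual distance larger than $1$; the left-hand side is positive (cross terms), while the right-hand side vanishes because the rearranged ball has diameter less than $1$. By contrast, the ``main obstacle'' you single out is not one: the truncated kernel $1_{\{|z|\le1\}}|z|^{-d}$ \emph{is} symmetric decreasing, and $\|\varphi^*\|_{**}\le\|\varphi\|_{**}$ follows by truncating the kernel from above and monotone convergence. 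Your proposed repair --- rewriting $(\varphi,\varphi)_{log}-\zeta_d\|\varphi\|_2^2$ as a two-point energy with a nonnegative decreasing ``log-regularized'' kernel --- cannot exist: such a representation would force $(\varphi,\varphi)_{log}\ge\zeta_d\|\varphi\|_2^2$ for every $\varphi\in C^\infty_c(\R^d)$, contradicting the scaling law of Lemma~\ref{lemma-scaling-properties}, which gives $\lambda_1(RB_d)=\lambda_1(B_d)-\log R\to-\infty$ as $R\to\infty$. The workable version of your strategy is precisely the route of \cite{HW}: perform the rearrangement at the level of the fractional seminorms for each $s\in(0,1)$ and then let $s\to0^+$ in the difference quotients.
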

 
\begin{cor}
\label{cor-faber-krahn}
For every open bounded sets $\Omega$ with Lipschitz boundary we have 
\begin{equation}
  \label{eq:sharp-lower-bound}
\lambda_1(\Omega) \ge \lambda_1(B_d) + \frac{1}{d}\log \frac{|B_d|}{|\Omega|},
\end{equation}
and equality holds if $\Omega$ is a ball.   
\end{cor}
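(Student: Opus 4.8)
The plan is to combine the Faber-Krahn inequality of Theorem~\ref{sec:faber-Krahn-main} with the scaling law of Lemma~\ref{lemma-scaling-properties}. First I would fix an open bounded set $\Omega$ with Lipschitz boundary and let $B = B_r(0)$ be the ball centered at the origin with $|B| = |\Omega|$. By Theorem~\ref{sec:faber-Krahn-main} applied with $\rho = |\Omega|$, the ball $B$ minimizes $\lambda_1$ among all such sets of the same measure, so $\lambda_1(\Omega) \ge \lambda_1(B)$. It remains to express $\lambda_1(B)$ in terms of $\lambda_1(B_d)$, where $B_d$ is the unit ball.

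The second step is to identify the dilation factor. Since $|B| = |\Omega|$ and $|B_r(0)| = r^d |B_d|$, we have $r = \bigl(|\Omega|/|B_d|\bigr)^{1/d}$, and $B = r B_d$ in the notation of Lemma~\ref{lemma-scaling-properties}. That lemma (note $B_d$ has Lipschitz boundary, so it applies) gives $\lambda_k(r B_d) = \lambda_k(B_d) - \log r$ for every $k$; in particular, for $k=1$,
\begin{equation*}
\lambda_1(B) = \lambda_1(B_d) - \log r = \lambda_1(B_d) - \frac{1}{d}\log \frac{|\Omega|}{|B_d|} = \lambda_1(B_d) + \frac{1}{d}\log \frac{|B_d|}{|\Omega|}.
\end{equation*}
Combining this with $\lambda_1(\Omega) \ge \lambda_1(B)$ yields the asserted inequality \eqref{eq:sharp-lower-bound}.

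For the equality statement, suppose $\Omega$ is itself a ball, say $\Omega = B_R(x_0)$. Translation invariance of the quadratic form $(\cdot,\cdot)_{log}$ (immediate from the definition via the Fourier transform, since $|\widehat{\varphi(\cdot - x_0)}(\xi)| = |\widehat{\varphi}(\xi)|$) shows $\lambda_1(B_R(x_0)) = \lambda_1(B_R(0)) = \lambda_1(R B_d)$, and then Lemma~\ref{lemma-scaling-properties} again gives $\lambda_1(\Omega) = \lambda_1(B_d) - \log R = \lambda_1(B_d) + \frac{1}{d}\log\frac{|B_d|}{|\Omega|}$, so \eqref{eq:sharp-lower-bound} holds with equality.

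This proof is essentially a bookkeeping exercise and has no serious obstacle; the only point requiring a moment of care is checking that Lemma~\ref{lemma-scaling-properties} is applicable — i.e. that the ball has Lipschitz boundary, which it does — and that translations leave $\lambda_1$ unchanged, which follows at once from the Fourier-side definition of the form. One could alternatively absorb the translation into a version of Lemma~\ref{lemma-scaling-properties} allowing affine maps, but invoking translation invariance directly is cleaner.
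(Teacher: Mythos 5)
Your proof is correct and follows the same route as the paper: combine the Faber--Krahn result of Theorem~\ref{sec:faber-Krahn-main} with the scaling identity $\lambda_1(rB_d)=\lambda_1(B_d)-\log r$ from Lemma~\ref{lemma-scaling-properties}, with $r=(|\Omega|/|B_d|)^{1/d}$. Your explicit treatment of the equality case via translation invariance of $(\cdot,\cdot)_{log}$ is a sensible detail the paper leaves implicit, but it is the same argument in substance.
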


\begin{proof}
The result follows by combining Theorem~\ref{sec:faber-Krahn-main} with the identity 
$$
\lambda_1(B_r(0)) = \lambda_1(B_d) + \log \frac{1}{r}\qquad \text{for $r>0$,}
$$
which follows from the scaling property of $\lambda_1$ noted in Lemma~\ref{lemma-scaling-properties}.
\end{proof}

Corollary~\ref{cor-faber-krahn} gives a sharp lower bound, but it contains the  
unknown quantity $\lambda_1(B_d)$. By Theorem~\ref{lower-bound-lambda_1-first}, we have 
\begin{align}
\lambda_1(B_d) &\ge \frac{1}{d} \log \frac{(2\pi)^{d}}{e |B_d|^2} = 
\log (2\pi) -\frac{1}{d}\bigl(1+ 2 \log |B_d|\bigr) \nonumber \\
&= \frac{2}{d} \log \Gamma\left(d/2\right)  + \log 2 + \frac{2}{d} \log \frac{d}{2} -\frac{1}{d}.
\label{lower-bound-lambda-1-first}  
\end{align}
The following theorem improves this lower bound in low dimensions $d \ge 2$. 

\begin{thm} 
\label{lower-bound-lambda-1-second}
For $d \ge 2$, we have 
\begin{equation}
  \label{eq:est-lambda-1-first}
\lambda_1(B_d) \ge \log \bigl(2 \sqrt{d +2}\bigr) -  
\frac{2^{d+1} |B_d|^2 (d +2)^{\frac{d}2} }{d (2\pi)^{2d}}.
\end{equation}
\end{thm}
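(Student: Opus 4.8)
The plan is to obtain a lower bound for $\lambda_1(B_d)$ by using the variational characterization together with a carefully chosen test function, rather than the crude estimate coming from $N(\lambda)$. Since $\lambda_1(B_d) = \inf \{(\varphi,\varphi)_{log} : \varphi \in \cH(B_d),\ \|\varphi\|_{L^2(B_d)} = 1\}$, \emph{any} admissible test function yields an upper bound on $\lambda_1(B_d)$, which is the wrong direction. So instead I expect the argument to run through the identity relating $(\varphi,\varphi)_{log}$ to the Fourier side: for a normalized $\varphi$,
\[
(\varphi,\varphi)_{log} = \frac{1}{(2\pi)^d}\int_{\R^d} \log|\xi|\,|\widehat\varphi(\xi)|^2\,d\xi,
\]
and one wants to bound this \emph{from below}. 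Write $\log|\xi| = \frac{1}{2}\log|\xi|^2$ and split the integral at $|\xi| = \sqrt{d+2}$ (this is the source of the $\sqrt{d+2}$ in the statement). On the region $|\xi| \ge \sqrt{d+2}$ one has $\log|\xi| \ge \log\sqrt{d+2}$, contributing $\log\sqrt{d+2}$ times the mass of $|\widehat\varphi|^2$ there; on the region $|\xi| \le \sqrt{d+2}$ one uses that $\log|\xi| \ge \log\sqrt{d+2} - (\text{something})$ because $\log|\xi|$ is bounded below there only going to $-\infty$ at the origin — so the honest bound is $\log|\xi| \ge \log\sqrt{d+2} + \big(\log|\xi| - \log\sqrt{d+2}\big)\,1_{\{|\xi|\le\sqrt{d+2}\}}$, and the negative correction term is controlled by $\|\widehat\varphi\|_\infty^2 \le \|\varphi\|_1^2 \le |B_d|\,\|\varphi\|_2^2 = |B_d|$ (using Cauchy–Schwarz on $B_d$), together with $\int_{|\xi|\le R}\big(\log\sqrt{d+2} - \log|\xi|\big)d\xi$, a computable integral equal to $|B_d|R^d\big(\tfrac1d + \log\tfrac{\sqrt{d+2}}{R}\big)$ evaluated at $R = \sqrt{d+2}$, i.e.\ $|B_d|(d+2)^{d/2}/d$. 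Putting these together gives
\[
\lambda_1(B_d) \ge \log\sqrt{d+2} - \frac{1}{(2\pi)^d}\cdot|B_d|\cdot\frac{(d+2)^{d/2}}{d}\cdot\|\widehat\varphi\|_\infty^2 / (2\pi)^d \cdot (\dots),
\]
and one must be careful tracking the $(2\pi)^d$ factors; the exponent $2d$ and the factor $2^{d+1}$ in the claimed bound strongly suggest that $\|\widehat\varphi\|_\infty^2$ is itself bounded using the specific first eigenfunction and that an extra factor of $2$ and a factor $\log 2$ enter, so the clean form $\log(2\sqrt{d+2})$ in the statement indicates the splitting point is really $|\xi| \le 2\sqrt{d+2}$ or that a substitution $\xi \mapsto 2\xi$ has been absorbed.

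More precisely, I would use the \emph{actual} first eigenfunction $\varphi_1$ of $\mathcal H$ on $B_d$, normalized in $L^2$, so that $\lambda_1(B_d) = (\varphi_1,\varphi_1)_{log}$ exactly, and then apply the lower-bound splitting above to $|\widehat{\varphi_1}|^2$. The only property of $\varphi_1$ used is $\|\varphi_1\|_2 = 1$ and $\mathrm{supp}\,\varphi_1 \subset B_d$, giving $\|\widehat{\varphi_1}\|_\infty \le |B_d|^{1/2}$; no positivity or symmetry of $\varphi_1$ is needed. The factor $2^{d+1}$ and the power $(2\pi)^{2d}$ then come out of $\|\widehat{\varphi_1}\|_\infty^2 \le |B_d|$ combined with the normalization $\frac{1}{(2\pi)^d}$ in the quadratic form and the volume integral $\frac{(d+2)^{d/2}}{d}|B_d|$ — I'd expect the bookkeeping to produce exactly $\dfrac{2^{d+1}|B_d|^2(d+2)^{d/2}}{d(2\pi)^{2d}}$ after choosing the split radius to optimize, which explains why it is $2\sqrt{d+2}$ inside the log (the $2$ is the optimizing constant, not arbitrary). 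I would verify the optimization: minimize over the split radius $R$ the expression $-\log R - C R^d$ where $C$ collects the constant $\frac{|B_d|^2}{d(2\pi)^{2d}}$; setting the derivative to zero gives $R^d = \frac{1}{dC}$, and substituting back reproduces the stated form with the logarithm of the optimal $R$.

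The main obstacle I anticipate is \emph{not} any deep inequality but rather getting all the constants exactly right — in particular pinning down where the factors of $2$ come from (whether from the $\frac12$ in $\mathcal H = \frac12\loglap$, from writing $\log|\xi| = \frac12\log|\xi|^2$, or from the optimization constant) and correctly tracking the $(2\pi)^{-d}$ normalization through two separate uses (once in the quadratic form, once implicitly via $\|\widehat\varphi\|_\infty \le \|\varphi\|_1$, which has \emph{no} $2\pi$). A secondary subtlety is ensuring the test-function argument is legitimate: one needs $\varphi_1 \in \cH(B_d)$ with $\widehat{\varphi_1} \in L^\infty$, which holds because $B_d$ is a bounded Lipschitz domain so by Corollary~\ref{space-equivalence} the eigenfunction lies in the right space and, being $L^1$ (finite measure support), has bounded Fourier transform. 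Once the splitting inequality
\[
\log|\xi| \ge \log\bigl(2\sqrt{d+2}\bigr)\ \text{on}\ |\xi|\ge 2\sqrt{d+2},
\qquad
\log|\xi| \ge \log\bigl(2\sqrt{d+2}\bigr) - \bigl(\log(2\sqrt{d+2}) - \log|\xi|\bigr)\ \text{on}\ |\xi|\le 2\sqrt{d+2}
\]
is set up, integrating against $|\widehat{\varphi_1}(\xi)|^2$, using $\int_{\R^d}|\widehat{\varphi_1}|^2 = (2\pi)^d$, bounding $\|\widehat{\varphi_1}\|_\infty^2 \le |B_d|$, and computing $\int_{|\xi|\le 2\sqrt{d+2}}\log\frac{2\sqrt{d+2}}{|\xi|}\,d\xi = \frac{|B_d|}{d}(2\sqrt{d+2})^d = \frac{2^d(d+2)^{d/2}|B_d|}{d}$ finishes the proof; the extra factor $2$ making $2^{d+1}$ appears because of the $\frac{1}{(2\pi)^d}$ versus needing to divide the error term by $(2\pi)^d$ only once while it carries its own normalization.
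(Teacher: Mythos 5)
Your overall template (use the eigenfunction itself, write $\lambda_1(B_d)=\frac{1}{(2\pi)^d}\int\log|\xi|\,|\widehat{\varphi_1}(\xi)|^2\,d\xi$, and control the low-frequency region where $\log|\xi|$ is small or negative) is indeed the skeleton of the paper's argument, but the way you control the low-frequency contribution is too lossy and does not yield the stated constant. Bounding $|\widehat{\varphi_1}(\xi)|^2$ by $\|\widehat{\varphi_1}\|_\infty^2\le\|\varphi_1\|_1^2\le|B_d|$ and computing $\int_{|\xi|\le R}\log\frac{R}{|\xi|}\,d\xi=\frac{|B_d|R^d}{d}$ gives, with $R=2\sqrt{d+2}$, only $\lambda_1(B_d)\ge\log\bigl(2\sqrt{d+2}\bigr)-\frac{2^d(d+2)^{d/2}|B_d|^2}{d\,(2\pi)^{d}}$, whose error term is larger than the one in the theorem by a factor $\frac{(2\pi)^d}{2}$: the theorem has $(2\pi)^{2d}$ in the denominator, and only one factor of $(2\pi)^d$ can come from the normalization of the quadratic form, so the second one must come from a genuinely better pointwise bound on $|\widehat{\varphi_1}|^2$ at low frequencies. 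Numerically your version gives $\log 4-2\approx-0.61$ for $d=2$, weaker even than the elementary bound of Theorem \ref{lower-bound-lambda_1-first} ($\approx 0.19$), whereas the claimed bound is $\approx 1.28$; so the "bookkeeping" you hoped would produce $2^{d+1}/(2\pi)^{2d}$ cannot work out within this scheme. Also, within your scheme the split radius $2\sqrt{d+2}$ is not the optimizer: minimizing $-\log R+CR^d$ with $C=\frac{|B_d|^2}{d(2\pi)^d}$ gives $R^d=(2\pi)^d/|B_d|^2$, not $2^d(d+2)^{d/2}$.

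The missing idea is to exploit that the first eigenfunction $u$ of the ball can be taken radial, and to estimate $\widehat u$ through its Hankel representation: by Cauchy--Schwarz in the radial variable, $|S^{d-1}|\,|\widehat u(s)|^2\le s^{-d}\int_0^s\tau J_{\frac d2-1}^2(\tau)\,d\tau$ for $s=|\xi|$. This encodes the oscillatory cancellation of $e^{-ix\xi}$ over spheres, which the crude $\|\widehat u\|_\infty\le\|u\|_1$ bound throws away, and it is precisely what recovers the extra factor of order $(2\pi)^{-d}$. One then integrates $(\lambda-\ln s)_+$ against this bound, uses Fubini to reduce to $\frac12\int_0^{e^\lambda}\tau J_{\frac d2-1}^2(\tau)(\lambda-\ln\tau)^2\,d\tau$, and applies the elementary estimate $J_\nu(x)\le\frac{x^\nu}{2^\nu\Gamma(\nu+1)}$, valid for $0\le x\le 2\sqrt{2(\nu+2)}$ (proved in the Appendix). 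With $\nu=\frac d2-1$ this validity range, not an optimization, is what forces the choice $\lambda=\log\bigl(2\sqrt{d+2}\bigr)$, and together with $\int_0^1\tau^{d-1}\ln^2\tau\,d\tau=\frac{2}{d^3}$ it produces exactly $\lambda_1(B_d)\ge\lambda-\frac{2|B_d|^2}{d(2\pi)^{2d}}e^{d\lambda}$ and hence the stated inequality. Without the radial/Bessel pointwise estimate (or some equally strong substitute for $|\widehat u|^2$ near $\xi=0$), your argument proves a true but strictly weaker inequality than the theorem.
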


\begin{proof}
 Let $u \in L^2(B_d)$ be radial with $\|u\|_{L^2}=1$. Then $\widehat u$ is also radial, and 
\begin{align*}
|\widehat u(\xi)|&=|\widehat u(s)|= s^{1-\frac{d}{2}}\left|\int_{0}^{1} u(r)J_{\frac{d}{2}-1}(rs)r^{\frac{d}{2}}dr\right| \\
  &\le s^{1-\frac{d}{2}} \left( \int_0^{1}r^{d-1} u^2(r)\,dr\right)^{1/2} 
\left(\int_0^{1}rJ_{\frac{d}{2}-1}^2(sr) \,dr\right)^{1/2}\\ 
  &=\frac{s^{1-\frac{d}{2}}}{\sqrt{|S^{d-1}|}} 
\left(s^{-2} \int_0^{s} \tau J_{\frac{d}{2}-1}^2(\tau) \,d\tau\right)^{1/2} \\
&=\frac{s^{-\frac{d}{2}}}{\sqrt{|S^{d-1}|}} 
\left(\int_0^{s} \tau J_{\frac{d}{2}-1}^2(\tau) \,d\tau\right)^{1/2}\qquad \text{for $\xi \in \R^d$ with $s = |\xi|$.}
\end{align*}
Consequently, 
$$
|S^{d-1}| |\widehat u(s)|^2 \le s^{-d} \int_0^{s} \tau J_{\frac{d}{2}-1}^2(\tau) \,d\tau.
$$
In the case where, in addition, $u$ is a radial eigenfunction of (\ref{D}) corresponding to $\lambda_1$ in $\Omega= B_d$, it follows that, for every $\lambda \in \R$,  
\begin{align*}
&(2\pi)^{d}[\lambda-\lambda_1] =  \int_{\R^d} (\lambda -\ln |\xi|)|\widehat u(\xi)|^2\,d\xi \le  \int_{\R^d} (\lambda -\ln |\xi|)_+|\widehat u(\xi)|^2\,d\xi\\
&=|S^{d-1}| 
\int_0^\infty s^{d-1} (\lambda -\ln s)_+|\widehat u(s)|^2\,ds \le \int_0^\infty \frac{(\lambda -\ln s)_+}{s}  \int_0^{s} \!\!\!\tau J_{\frac{d}{2}-1}^2(\tau) \,d\tau \,ds\\
&= \int_0^\infty  \tau J_{\frac{d}{2}-1}^2(\tau) \int_{\tau}^\infty \frac{(\lambda -\ln s)_+}{s}  \,ds d\tau= \int_0^{e^\lambda}  \tau J_{\frac{d}{2}-1}^2(\tau) \int_{\tau}^{e^\lambda} \frac{\lambda -\ln s}{s} \,ds d\tau\\
&= \int_0^{e^\lambda}  \tau J_{\frac{d}{2}-1}^2(\tau) \int_{\ln \tau}^{\lambda}(\lambda - s) \,ds d\tau= \int_0^{e^\lambda}  \tau J_{\frac{d}{2}-1}^2(\tau) \int_{0}^{\lambda- \ln \tau} s \,ds d\tau\\
&= \frac{1}{2}  \int_0^{e^\lambda}  \tau J_{\frac{d}{2}-1}^2(\tau)
\bigl(\lambda- \ln \tau \bigr)^2 \,d\tau = \frac{e^{2\lambda}}{2}  \int_0^{1}  \tau J_{\frac{d}{2}-1}^2(e^\lambda \tau) \ln^2 \tau \,d\tau. 
\end{align*}
We now use the following estimate for Bessel functions of the first kind:
\begin{equation}
  \label{eq:bessel-est-proof}
J_\nu(x) \le \frac{x^\nu}{2^\nu \Gamma(\nu+1)} \quad \text{for $\:\nu > \sqrt{3}-2$, $\:0 \le x < 2 \sqrt{2(\nu+2)}$.}
\end{equation}
A proof of this elementary estimate is given in the Appendix. We wish to apply (\ref{eq:bessel-est-proof}) with $\nu = \frac{d}{2}-1$. This gives 
$$
e^{2\lambda} J_{\frac{d}{2}-1}^2(r_0 e^\lambda \tau) \le e^{d\lambda} \frac{\tau^{d-2}}{2^{d-2}\Gamma^2 (\frac{d}{2})}=
 \frac{d^2 |B_d|^2e^{d\lambda}}{(2\pi)^{d}} \tau^{d-2}
 \qquad \text{for $\tau \in [0,1]$} 
$$ 
if $d \ge 2$ and $e^\lambda \le 2 \sqrt{d +2}$, i.e., if   
\begin{equation}
\label{condition-proof}
d \ge 2 \quad \text{and}\quad \lambda \le \log \bigl(2 \sqrt{d +2}\bigr).
\end{equation}
Here we used that $|B_d|= \frac{2}{d} \frac{\pi^{\frac{d}{2}}}{\Gamma(d/2)}$. Consequently, if (\ref{condition-proof}) holds, we find that  
$$
(2\pi)^{d}[\lambda-\lambda_1] \le  \frac{d^2 |B_d|^2e^{d\lambda}}{(2\pi)^{d}}\int_0^{1} 
 \tau^{d-1} \ln^2 \tau \,d\tau,
$$
where 
$$
\int_0^{1} \tau^{d-1} \ln^2 \tau d\tau = - \frac{2}{d}
\int_0^1 \tau^{d-1} \ln \tau d\tau = \frac{2}{d^2} \int_{0}^1 \tau^{d-1}d\tau
= \frac{2}{d^3}.
$$
Hence 
$$
(2\pi)^{d}[\lambda -\lambda_1] \le \frac{2|B_d|^2}{d (2\pi)^{d}}e^{d\lambda}, \quad \text{i.e.,}\quad \lambda_1 \ge \lambda- \frac{2|B_d|^2 }{d (2\pi)^{2d}} e^{d\lambda}.
$$
Inserting the value $\lambda = \log \bigl(2 \sqrt{d +2}\bigr)$ from (\ref{condition-proof}), we deduce that 
$$
\lambda_1= \lambda_1(B_d) \ge  \log \bigl(2 \sqrt{d +2}\bigr) - \frac{2^{d+1} |B_d|^2 (d +2)^{\frac{d}2} }{d(2\pi)^{2d}},  
$$
as claimed. 
\end{proof}

\begin{rem}{\rm 
\label{rem-comparison-of-other-bounds}
It seems instructive to compare the lower bounds given in (\ref{lower-bound-lambda-1-first}) and (\ref{eq:est-lambda-1-first}) with other bounds obtained from spectral estimates which are already available in the literature. We first mention Beckner's logarithmic estimate of uncertainty \cite[Theorem 1]{B}, which implies that\footnote{We note here that a different definition of Fourier transform is used in \cite{B} and therefore the inequality looks slightly different}
\begin{equation*}
(\varphi,\varphi)_{log} \ge \int_{\R^d} \left[\psi\left(d/4\right)+ \log \frac{2}{|x|}\right]\varphi^2(x) dx \ge \left[\psi\left(d/4\right)+ \log 2\right]\|\varphi\|_2^2 
\end{equation*}
for functions $\varphi \in C^\infty_c(B_d)$ and therefore 
\begin{equation}
  \label{eq:beckner-lambda-1-est}
\lambda_1(B_d) \ge \psi\left(d/4\right)+ \log 2 .
\end{equation}
Here, as before, $\psi = \frac{\Gamma'}{\Gamma}$ denotes the Digamma function. Next we state a further lower bound for $(\varphi,\varphi)_{log}$ which follows from \cite[Proposition 3.2 and Lemma 4.11]{HW}. We have
\begin{equation} 
\label{cw-inequality}
(\varphi,\varphi)_{log} \ge \zeta_d \|\varphi\|_2^2 \qquad \text{for $\varphi \in C^\infty_c(B_d)$,}
\end{equation}
where $\zeta_d$ is given in (\ref{eq:def-zeta_d}), i.e., 
$$
\zeta_d =  \log 2 + \frac{1}{2}\left( \psi(d/2)-\gamma\right) = \left\{
  \begin{aligned}
  &- \gamma + \sum_{k=1}^{\frac{d-1}{2}} \frac{1}{2k-1},&&\qquad \text{$d$ odd,}\\
  &\log 2 - \gamma + \sum_{k=1}^{\frac{d-2}{2}} \frac{1}{k},&&\qquad \text{$d$ even.}   
  \end{aligned}
\right.
$$
Inequality (\ref{cw-inequality}) implies that
\begin{equation} 
\label{cw-lambda-1-bound}
\lambda_1(B_d) \ge \zeta_d.
\end{equation}
The latter inequality can also be derived from a lower bound of Ba$\rm{\tilde{n}}$uelos and Kulczycki for the 
first Dirichlet eigenvalue $\lambda_1^\alpha(B_d)$ of the fractional Laplacian $(-\Delta)^{\alpha/2}$ in $B_d$. In 
\cite[Corollary 2.2]{BK}, it is proved that 
$$
\lambda_1^\alpha(B_d) \ge 2^\alpha \frac{\Gamma(1+\frac{\alpha}{2}) \Gamma(\frac{d+\alpha}{2})}{\Gamma(\frac{d}{2})}\qquad \text{for $\alpha \in (0,2)$.}
$$
Combining this inequality with the characterization of $\lambda_1(B_d)$ given in \cite[Theorem 1.5]{HW}, we deduce that   
$$
\lambda_1(B_d)= \lim_{\alpha \to 0^+}\frac{\lambda_1^\alpha(B_d)-1}{\alpha}\ge \frac{d}{d\alpha}\Big|_{\alpha=0}\, 2^\alpha \frac{\Gamma(1+\frac{\alpha}{2}) \Gamma(\frac{d+\alpha}{2})}{\Gamma(\frac{d}{2})} = \zeta_d, 
$$
as stated in (\ref{cw-lambda-1-bound}). 

We briefly comment on the quality of the lower bounds obtained here in low and high dimensions. In low dimensions $d \ge 2$, (\ref{eq:est-lambda-1-first}) is better than the bounds (\ref{lower-bound-lambda-1-first}), (\ref{eq:beckner-lambda-1-est}) and (\ref{cw-lambda-1-bound}). In dimension $d=1$ where the bound (\ref{eq:est-lambda-1-first}) is not available, the bound (\ref{lower-bound-lambda-1-first}) yields the best value. The following table shows numerical values of the bounds $b_1(d)$, $b_2(d)$, $b_3(d)$ resp. $b_4(d)$ given by (\ref{lower-bound-lambda-1-first}), (\ref{eq:est-lambda-1-first}), (\ref{eq:beckner-lambda-1-est}), (\ref{cw-lambda-1-bound}), respectively. 
\medskip

\begin{center}
{\tiny
\renewcommand{\arraystretch}{1.6}
    \begin{tabular}{ l | l | l | l | l |l | l | l | l | l | l |}
    $d$ & 1 & 2 & 3 & 4&5&6&7&8&9&10\\ \hline
    $b_1(d)$ & $-0,55$ & $0,19$ & $0,55$ & $0,79$ &$0,97$&$1,12$ & $1,25$ & $1,36$ & $1,46$ & $1,55$
\\ \hline
$b_2(d)$ & $\quad/$& $1,28 $& $1,48 $ & $1,59 $& $1,67$&$1,73$ &$1,79$ & $1,84$& $1,89$ & $1,94$\\ \hline
$b_3(d)$ &$-3.53$ & $-1,27$ & $-0,39$ &$0,12$&$0,47$ & $0,73$&$0,94$& $1,12$ &$1,27$ & $1,40$ \\ \hline
$b_4(d)$ &$-0,58$& $0,12$ &$0,42$ &$0,62$& $0,76$ &$0,87$ &$0,96$ & $1,03$ & $1,10$ & $1,16$
\end{tabular}
\renewcommand{\arraystretch}{1}
}
\end{center}
\medskip

To compare the bounds in high dimensions, we consider the asymptotics as $d \to \infty$. 
Since $\frac{\log \Gamma(t)}{t} = \log t - 1 + o(t)$ as $t \to \infty$, the bound (\ref{lower-bound-lambda-1-first}) yields 
\begin{equation}
\label{lower-bound-lambda-1-first-asymptotics}
\lambda_1(B_d) \ge \log d - 1 + o(1) \qquad \text{as $d \to \infty$,} 
\end{equation}
whereas (\ref{eq:est-lambda-1-first}) obviously gives 
\begin{equation}
\label{lower-bound-est-lambda-1-first-asymptotics}
\lambda_1(B_d) \ge \log \sqrt{d+2} + \log 2  + o(1) \qquad \text{as $d \to \infty$,} 
\end{equation}
Moreover, from (\ref{eq:beckner-lambda-1-est}) and the fact that 
\begin{equation}
  \label{eq:Digamma-asymptotics}
\psi(t) = \log t + o(1)\qquad\text{as $t \to \infty$,}   
\end{equation}
we deduce that 
\begin{equation}
  \label{eq:beckner-lambda-1-est-asymptotics}
\lambda_1(B_d) \ge \log d - \log 2 + o(1) \qquad \text{as $d \to \infty$,} 
\end{equation}
Finally, (\ref{cw-inequality}) and (\ref{eq:Digamma-asymptotics}) yield
\begin{equation} 
\label{cw-lambda-1-bound-asymptotics}
\lambda_1(B_d) \ge \log \sqrt{d} +  \log 2  -\frac{\gamma}{2} + o(1) \qquad \text{as $d \to \infty$.} 
\end{equation}
So (\ref{eq:beckner-lambda-1-est-asymptotics}) provides the best asymptotic bound as $d \to \infty$.

Numerical computations indicate that the bound (\ref{eq:est-lambda-1-first}) is better than the other bounds for $2 \le d \le 21$, and (\ref{eq:beckner-lambda-1-est}) is the best among these bounds for $d \ge 22$.
}

\end{rem}


\section{An asymptotic lower trace bound}\label{LowB1}

Throughout this section, we let $\Omega \subset \R^d$ denote an open set of finite measure. In this section we prove the following asymptotic lower bound. A similar statement was obtained in \cite{G} for the Dirichlet boundary problem for a fractional Laplacian.

\begin {thm}\label{2.1}
For the eigenvalues of the problem \eqref{D} and any $\lambda\in \Bbb R$ we have
\begin{equation}\label{BLow}
\liminf_{\lambda\to\infty} e^{-d\lambda} \sum_{k}(\lambda - \lambda_k)_+ \ge \frac{1}{(2\pi)^{d}}\, |\Omega|\, \, |B_d|\, d^{-1}.
\end{equation}
\end{thm}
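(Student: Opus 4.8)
The plan is to obtain the matching lower bound by a coherent-states (Berezin–Lieb) argument, mirroring the upper bound in Theorem~\ref{1.1} but going in the opposite direction. The starting point is the variational (max–min) characterization: for any trial family of functions one gets a lower bound on $\sum_k(\lambda-\lambda_k)_+$ via
$$
\sum_k (\lambda-\lambda_k)_+ \ge \sum_k \bigl(\lambda (u,\varphi_k)^2 - (\mathcal H u, u)(u,\varphi_k)^2/\|u\|^2\bigr)
$$
summed over a suitable orthonormal-like system; more precisely, one uses the standard fact that for a nonnegative trace-class-type bound, if $\{u_\alpha\}$ is a (generalized) tight frame in $L^2(\Omega)$ with resolution of the identity $\int |u_\alpha\rangle\langle u_\alpha|\,d\alpha = \mathrm{Id}_{L^2(\Omega)}$, then
$$
\sum_k (\lambda-\lambda_k)_+ \ge \int \Bigl(\lambda \|u_\alpha\|^2 - (\mathcal H u_\alpha,u_\alpha)\Bigr)_+\,d\alpha .
$$
The natural frame here is built from the exponentials (coherent states): fix a smooth compactly supported bump $g$, scale it to concentrate, translate and modulate it, $u_{y,\xi}(x)= e^{ix\xi} g_\ell(x-y)$, and integrate over $y \in \Omega$ (or a slightly shrunk subset) and $\xi \in \R^d$. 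This is exactly the approach of \cite{G} for the fractional Laplacian, adapted to the symbol $\log|\xi|$.

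The key steps, in order, would be: (1) Set up the coherent-states resolution of the identity restricted to $\Omega$, being careful that $u_{y,\xi}$ is supported in $\Omega$ only when $y$ lies in the inner parallel set $\Omega_\ell := \{y : \operatorname{dist}(y,\partial\Omega) > \ell\}$, whose measure tends to $|\Omega|$ as $\ell \to 0$. (2) Compute $(\mathcal H u_{y,\xi}, u_{y,\xi})$ on the Fourier side: since $\widehat{u_{y,\xi}}(\eta) = e^{-iy(\eta-\xi)}\widehat{g_\ell}(\eta-\xi)$, one gets
$$
(\mathcal H u_{y,\xi},u_{y,\xi}) = \frac{1}{(2\pi)^d}\int_{\R^d} \log|\eta|\,|\widehat{g_\ell}(\eta-\xi)|^2\,d\eta = \frac{\|g_\ell\|_2^2}{(2\pi)^d}\cdot(2\pi)^d \cdot \bigl(\log|\xi| + E_\ell(\xi)\bigr),
$$
where $E_\ell(\xi) = \frac{1}{(2\pi)^d\|g_\ell\|_2^2}\int \bigl(\log|\xi+\zeta|-\log|\xi|\bigr)|\widehat{g_\ell}(\zeta)|^2\,d\zeta$ is a controlled error: by concavity/Jensen-type bounds and the scaling $\widehat{g_\ell}(\zeta) = \ell^d \widehat g(\ell\zeta)$, the mass of $|\widehat{g_\ell}|^2$ concentrates at scale $1/\ell$ near the origin, so $E_\ell(\xi) \to 0$ uniformly for $|\xi|$ in the relevant range as $\ell \to 0$, after first sending $\lambda \to \infty$ so that the relevant $\xi$'s satisfy $|\xi| \le e^\lambda$ with $\log|\xi|$ large. (3) Plug into the frame inequality and obtain
$$
\sum_k (\lambda-\lambda_k)_+ \ge \frac{|\Omega_\ell|\,\|g_\ell\|_2^2}{(2\pi)^d}\int_{\R^d}\bigl(\lambda - \log|\xi| - E_\ell(\xi)\bigr)_+\,d\xi .
$$
(4) Normalize $\|g_\ell\|_2 = 1$, evaluate the leading integral exactly as in Theorem~\ref{1.1} (it equals $e^{d\lambda}|B_d|/d$ up to the error term), divide by $e^{d\lambda}$, take $\liminf_{\lambda\to\infty}$, and finally let $\ell \to 0$ so that $|\Omega_\ell| \to |\Omega|$ and the error contribution vanishes.

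The main obstacle is step (2)–(3): controlling the error term $E_\ell(\xi)$ uniformly enough, and in particular handling the region where $|\xi|$ is small (so $\log|\xi|$ is very negative or the translate $\xi+\zeta$ can vanish, making $\log|\xi+\zeta|$ singular). The point is that the singularity of $\log|\cdot|$ is only logarithmic and locally integrable, so $\int_{|\zeta|\le 1}\log|\xi+\zeta|\,|\widehat{g_\ell}(\zeta)|^2\,d\zeta$ stays bounded; and the small-$|\xi|$ region contributes only a bounded amount to $\int(\lambda-\log|\xi|)_+\,d\xi \sim e^{d\lambda}$, hence is negligible after dividing by $e^{d\lambda}$ and taking $\lambda\to\infty$. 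One must order the limits correctly — $\lambda \to \infty$ first (inside the $\liminf$), then $\ell \to 0$ — and verify that for each fixed $\ell$ the $\liminf$ already produces $|\Omega_\ell|\,|B_d|/(d(2\pi)^d)$, which then increases to the claimed constant. This is routine once the concentration estimate for $\widehat{g_\ell}$ is pinned down, but it is where all the care goes.
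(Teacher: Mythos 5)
Your proposal is essentially the paper's own proof: coherent states $e^{-i\xi x}g(x-y)$ with $y$ restricted to an inner parallel set, the Berezin--Lieb/Jensen inequality for the convex function $t\mapsto(\lambda-t)_+$ applied to the spectral measure of $\mathcal H$, the computation $(\mathcal H e_{\xi,y},e_{\xi,y})=\log|\xi|+\text{error}$, and the limits taken in the same order ($\lambda\to\infty$ first, then the geometric parameter $\to 0$ so that $|\Omega_\delta|\to|\Omega|$). The one inaccurate side remark --- that $|\widehat{g_\ell}|^2$ ``concentrates'' as $\ell\to 0$ (it in fact spreads to scale $1/\ell$, so $E_\ell$ does not tend to $0$ uniformly) --- is harmless, since your stated order of limits only needs a fixed-$\ell$ bound such as $E_\ell(\xi)\le C_\ell|\xi|^{-1}$, which is exactly the estimate $R(y,\xi)\le C|\xi|^{-1}$ used in the paper.
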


\begin{proof}
Let us fix $\delta>0$ and consider 
$$
\Omega_\delta = \{ x\in \Omega: \, {\rm dist}(x, \Bbb R^d \setminus \Omega) >\delta\}.
$$
Since $\delta$ is arbitrary it  suffices to show the lower bound \eqref{BLow}, where $\Omega$ is replaced by $\Omega_\delta$.
Let $g\in C_0^\infty(\Bbb R^d)$ be a real-valued even function, $\|g\|_{L^2(\Bbb R^d)} = 1$ with support in $\{x\in \Bbb R^d: \, |x| \le \delta/2\}$. For $\xi\in \Bbb R^d$ and $x\in \Omega_\delta$ we introduce the \lq\lq coherent state" 
$$
e_{\xi,y}(x) =  e^{-i\xi x} g(x-y).
$$
Note that $\|e_{\xi,y}\|_{L^2(\Bbb R^d)} = 1$. Using the properties of coherent states \cite[Theorem 12.8]{LL} we obtain
$$
\sum_{k}(\lambda - \lambda_k)_+ \ge 
 \frac{1}{(2\pi)^{d}}\, \int_{\Bbb R^d} \int_{\Omega_\delta} (e_{\xi,y}, (\lambda - \mathcal H)_+ e_{\xi,y})_{L^2(\Omega)} \, dy d\xi.
$$
Since $t \mapsto (\lambda-t)_+$ is convex then applying Jensen's inequality to the spectral measure of $\mathcal H$ we obtain
\begin{equation}\label{jensen}
\sum_{k}(\lambda - \lambda_k)_+ \ge   \frac{1}{(2\pi)^{d}}\, \int_{\Bbb R^d} \int_{\Omega_\delta}
\left(\lambda - (\mathcal H e_{\xi,y}, e_{\xi,y})_{L^2(\Omega)} \right)_+ \, dy d\xi.
\end{equation}
Next we consider the quadratic form 
\begin{multline*}
\left(\mathcal H e_{\xi,y},  e_{\xi,y} \right)_{L^2(\Omega)} = \frac{1}{(2\pi)^d} \, \int_{\Bbb R^d} \int_{\Omega} \int_{\Omega} e^{i(x-z)(\eta-\xi)} g(x-y)g(z-y) \log(|\eta|)  \, dz dx d\eta \\
=
\frac{1}{(2\pi)^d} \, \int_{\Bbb R^{d}} \int_{\Omega} \int_{\Omega} 
e^{i(x-z)\rho} g(x-y)g(z-y) \log(|\xi-\rho|) \, dz dx d\rho\\
= \frac{1}{(2\pi)^d} \, \int_{\Bbb R^{d}} \int_{\Omega} \int_{\Omega} 
e^{i(x-z)\rho} g(x-y)g(z-y) \left( \log|\xi| + \log \left(\left|\xi -\rho\right|/|\xi| \right)\right)
\, dz dx d\rho\\
=  \log|\xi| + R(y,\xi).
\end{multline*}
Since $g\in C_0^\infty(\Bbb R^d)$ we have for any $M>0$
\begin{multline*}
R(y,\xi) =  \\
\frac{1}{(2\pi)^d} \, \int_{\Bbb R^{d}} \int_{\Omega} \int_{\Omega} 
e^{i(x-y)\rho} g(x-y) e^{i(y-z)\rho} g(z-y)  \log \left(\left|\xi -\rho\right|/|\xi| \right)
\, dz dx d\rho\\
= \int_{\Bbb R^{d}} |\widehat{g}|^2\, \log \left(\left|\xi -\rho\right|/|\xi| \right) \, d\rho
 \le C_M\,
\int_{\Bbb R^{d}} (1+ |\rho|)^{-M}   \log \left(\left|\xi -\rho\right|/|\xi| \right)  \,  d\rho\\
\le C\, |\xi|^{-1}.
\end{multline*}
Therefore from \eqref{jensen} we find 
\begin{equation}\label{below1}
\sum_{k}(\lambda - \lambda_k)_+  \ge (2\pi)^{-d}\, |\Omega_\delta| \, \int_{\Bbb R^d}  (\lambda -  \log|\xi| -  C|\xi|^{-1})_+ \, d\xi.
\end{equation}
Let us redefine the spectral parameter $\lambda =  \ln \mu$.
Then introducing polar coordinates we find
\begin{multline}\label{below22} 
\int_{\Bbb R^d}  (\lambda -  \log|\xi| -  C|\xi|^{-1})_+ \, d\xi = \left|\Bbb S^{d-1} \right| \, \int_0^\infty \left(\ln \frac{\mu}{r} - \frac{C}{r}\right)_+ \, r^{d-1}dr\\
= 
\mu^d\, \left|\Bbb S^{d-1} \right| \, \int_0^\infty \left(\ln \frac{1}{r} - \frac{C}{\mu r}\right)_+ \, r^{d-1}dr
\end{multline} 
The expression in the latter integral is positive if $ - r\ln r > C\mu^{-1}$. The function $ -r\ln r $ is concave. 

\smallskip

\qquad\qquad\qquad \qquad \qquad{\centering 
{\includegraphics[scale=.4]{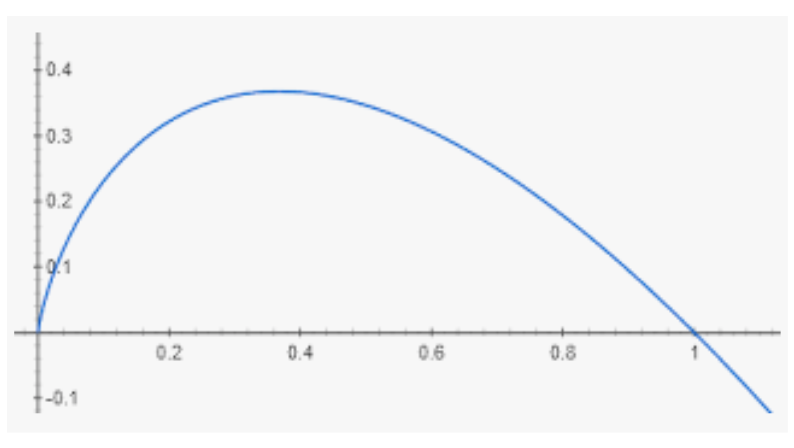}} }
 
\smallskip
 \noindent
Its maximum is achieved at $r=1/e$ at the value $1/e$. The equation
$ - r\ln r = C\mu^{-1}$ has two solutions $r_1(\mu)$ and $r_2(\mu)$ such that $r_1(\mu) \to 0$ and $r_2(\mu)\to 1$ as $\mu \to\infty$
Therefore 
\begin{multline}\label{below3}
\int_0^\infty \left(\ln \frac{1}{r} - \frac{C}{\mu r}\right)_+ \, r^{d-1}dr \ge 
\int_{r_1(\mu)}^{r_2(\mu)}     \left(\ln \frac{1}{r} - \frac{C}{\mu r}\right)  \, r^{d-1}dr \\
=-\frac1d\, r^d \ln r  \Big|_{r_1(\mu)}^{r_2(\mu)}+ \frac{C}{\mu(d+1)} r^{d+1}  \Big|_{r_1(\mu)}^{r_2(\mu)}
+ \frac{1}{d^2}r^d   \Big|_{r_1(\mu)}^{r_2(\mu)} \to \frac{1}{d^2} \quad {\rm as} \quad \mu\to\infty.
\end{multline} 
Putting together \eqref{below1}, \eqref{below22} and \eqref{below3} and using $\mu = e^\lambda$ we obtain
$$
\liminf_{\lambda\to\infty} e^{-d\lambda} \sum_{k}(\lambda - \lambda_k)_+ \ge \frac{1}{(2\pi)^{d}}\, |\Omega_\delta|\, \, |B_d|\, d^{-1}.
$$
Since $\delta>0$ is arbitrary we complete the proof of  Theorem \ref{2.1}.

\end{proof}

\section{Weyl asymptotics}\label{Weyl}

\noindent
Throughout this section, we let $\Omega \subset \R^d$ denote an open set of finite measure. Combining Theorems \ref{1.1} and \ref{2.1} we have


\begin {thm}\label{3.1}
The Riesz means of the eigenvalues of the Dirichlet boundary value problem  \eqref{D} satisfy the following asymptotic formula 
\begin{equation}\label{Weyl1}
 \lim_{\lambda\to\infty} e^{-d\lambda}\, \sum_{k}(\lambda - \lambda_k)_+ = \frac{1}{(2\pi)^{d}}\, |\Omega|\,  |B_d|\, d^{-1}.
\end{equation}
\end{thm}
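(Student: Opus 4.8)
\textbf{Proof proposal for Theorem \ref{3.1}.} The plan is to obtain the Weyl asymptotics for the Riesz means \eqref{Weyl1} by squeezing the quantity $e^{-d\lambda}\sum_k(\lambda-\lambda_k)_+$ between a matching upper bound and lower bound, both of which are already established in the excerpt. Concretely, Theorem \ref{1.1} provides, for every $\lambda \in \R$, the inequality
\begin{equation*}
e^{-d\lambda}\sum_k(\lambda-\lambda_k)_+ \le \frac{1}{(2\pi)^d}\,|\Omega|\,|B_d|\,d^{-1},
\end{equation*}
so that
\begin{equation*}
\limsup_{\lambda\to\infty} e^{-d\lambda}\sum_k(\lambda-\lambda_k)_+ \le \frac{1}{(2\pi)^d}\,|\Omega|\,|B_d|\,d^{-1}.
\end{equation*}
On the other hand, Theorem \ref{2.1} gives
\begin{equation*}
\liminf_{\lambda\to\infty} e^{-d\lambda}\sum_k(\lambda-\lambda_k)_+ \ge \frac{1}{(2\pi)^d}\,|\Omega|\,|B_d|\,d^{-1}.
\end{equation*}
Since $\liminf \le \limsup$ always holds, the two displayed one-sided bounds force equality of the liminf and the limsup with the common value $\frac{1}{(2\pi)^d}|\Omega|\,|B_d|\,d^{-1}$; in particular the limit exists and equals this value, which is precisely \eqref{Weyl1}.

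There is essentially no obstacle here: the theorem is a formal corollary of the two preceding theorems, and the only thing to check is that the upper bound of Theorem \ref{1.1} indeed holds for every fixed $\lambda$ (not merely asymptotically), which it does as stated in \eqref{BU}. One should simply write the chain of inequalities
\begin{equation*}
\frac{1}{(2\pi)^d}\,|\Omega|\,|B_d|\,d^{-1} \le \liminf_{\lambda\to\infty} e^{-d\lambda}\sum_k(\lambda-\lambda_k)_+ \le \limsup_{\lambda\to\infty} e^{-d\lambda}\sum_k(\lambda-\lambda_k)_+ \le \frac{1}{(2\pi)^d}\,|\Omega|\,|B_d|\,d^{-1},
\end{equation*}
and conclude that all four quantities coincide. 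No smoothness hypothesis on $\partial\Omega$ is needed, consistent with the hypotheses of Theorems \ref{1.1} and \ref{2.1}, which only require $\Omega$ to have finite measure. If desired, one may additionally remark that $N(\lambda)$ admits the corresponding Weyl law as a consequence (this is the content of the Corollary \ref{3.2} referenced in the introduction), obtained by the same Tauberian-type squeezing using the monotonicity of $\lambda \mapsto N(\lambda)$ together with \eqref{Weyl1}, but that is outside the statement of Theorem \ref{3.1} itself.
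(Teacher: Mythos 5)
Your proposal is correct and coincides with the paper's own argument: the paper obtains Theorem \ref{3.1} precisely by combining the uniform upper bound of Theorem \ref{1.1} with the asymptotic lower bound of Theorem \ref{2.1}, exactly the squeeze you describe. Nothing further is needed.
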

As a corollary we can obtain asymptotics of the number of  the eigenvalues of the operator $\mathcal H$.


\begin{cor} \label{3.2}
The number of the eigenvalues $N(\lambda)$ of the Dirichlet boundary value problem  \eqref{D} below $\lambda$ satisfies the following asymptotic formula 
\begin{equation}\label{Weyl2}
\lim_{\lambda\to\infty} e^{-d\lambda} \, N(\lambda) =   \frac{1}{(2\pi)^{d}}\, |\Omega|\,  |B_d|.
\end{equation}
\end{cor}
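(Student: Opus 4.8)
The plan is to derive the eigenvalue-counting asymptotics \eqref{Weyl2} from the Riesz-mean asymptotics \eqref{Weyl1} established in Theorem~\ref{3.1}, using the standard Tauberian comparison between $\sum_k(\lambda-\lambda_k)_+$ and $N(\lambda)$ together with the monotonicity of both quantities in $\lambda$. The key point is that the Riesz mean is, up to constants, a primitive of $N$: indeed $\sum_k(\lambda-\lambda_k)_+ = \int_{-\infty}^\lambda N(t)\,dt$, so \eqref{Weyl1} says this primitive grows like $\frac{1}{(2\pi)^d}|\Omega||B_d|d^{-1}e^{d\lambda}$, and one wants to differentiate this asymptotic. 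Since $N$ is nondecreasing this can be done by the classical squeezing trick.

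Concretely, first I would fix $h>0$ and use monotonicity of $N$ to sandwich $N(\lambda)$: for any $h>0$,
\begin{equation*}
\frac{1}{h}\sum_k(\lambda+h-\lambda_k)_+ - \frac{1}{h}\sum_k(\lambda-\lambda_k)_+ = \frac{1}{h}\int_\lambda^{\lambda+h} N(t)\,dt \ge N(\lambda),
\end{equation*}
and similarly
\begin{equation*}
N(\lambda) \ge \frac{1}{h}\int_{\lambda-h}^{\lambda} N(t)\,dt = \frac{1}{h}\sum_k(\lambda-\lambda_k)_+ - \frac{1}{h}\sum_k(\lambda-h-\lambda_k)_+.
\end{equation*}
Next I would multiply through by $e^{-d\lambda}$ and pass to the limit $\lambda\to\infty$ using Theorem~\ref{3.1}. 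The upper bound gives
\begin{equation*}
\limsup_{\lambda\to\infty} e^{-d\lambda}N(\lambda) \le \frac{1}{(2\pi)^d}|\Omega||B_d|\,\frac{1}{d}\cdot\frac{e^{dh}-1}{h},
\end{equation*}
and the lower bound gives
\begin{equation*}
\liminf_{\lambda\to\infty} e^{-d\lambda}N(\lambda) \ge \frac{1}{(2\pi)^d}|\Omega||B_d|\,\frac{1}{d}\cdot\frac{1-e^{-dh}}{h}.
\end{equation*}
Finally I would let $h\to 0^+$: since $\frac{e^{dh}-1}{h}\to d$ and $\frac{1-e^{-dh}}{h}\to d$, both bounds converge to $\frac{1}{(2\pi)^d}|\Omega||B_d|$, which forces the limit in \eqref{Weyl2} to exist and equal that value.

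There is no serious obstacle here; the only thing to be careful about is justifying the identity $\sum_k(\lambda-\lambda_k)_+ = \int_{-\infty}^\lambda N(t)\,dt$ (equivalently that the interchange of sum and integral is legitimate), which is immediate because all terms are nonnegative and, by Theorem~\ref{1.1}, the sum is finite for every $\lambda$; and checking that $N(t)=0$ for $t$ sufficiently negative, which holds by Theorem~\ref{lower-bound-lambda_1-first} (or directly because the spectrum is bounded below), so the integral over $(-\infty,\lambda)$ is really a finite integral. With these observations the squeezing argument is routine, so the main ``work'' is simply assembling the two-sided bound and taking the two limits in the correct order.
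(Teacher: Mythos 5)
Your argument is correct and is essentially the paper's own proof: the same two-sided difference-quotient sandwich for $N(\lambda)$ in terms of Riesz means at $\lambda\pm h$, followed by $\lambda\to\infty$ and then $h\to 0^+$. The only cosmetic differences are that you derive the sandwich from the identity $\sum_k(\lambda-\lambda_k)_+=\int_{-\infty}^{\lambda}N(t)\,dt$ rather than from the pointwise inequalities \eqref{Nabove}--\eqref{Nbelow} summed over $k$, and that you invoke Theorem~\ref{3.1} directly where the paper cites Theorems~\ref{1.1} and~\ref{2.1}, which is equivalent.
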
 

\begin{proof}
In order to prove \eqref{Weyl2} we use two simple inequalities. If $h>0$, then
\begin{equation}\label{Nabove}
\frac{(\lambda + h - \lambda_k)_+   - (\lambda - \lambda_k)_+}{h} \ge 1_{\text{\tiny $(-\infty,\lambda)$}}(\lambda_k)
\end{equation}
and
\begin{equation}\label{Nbelow}
\frac{(\lambda - \lambda_k)_+   - (\lambda - h- \lambda_k)_+}{h}
 \le 1_{\text{\tiny $(-\infty,\lambda)$}}(\lambda_k)
\end{equation}

The inequality \eqref{Nabove} implies, together with Theorems \ref{1.1} and~\ref{2.1}, that 
\begin{align*}
&\limsup_{\lambda \to \infty}e^{-d\lambda}N(\lambda) \le 
\limsup_{\lambda \to \infty}e^{-d\lambda} \sum_{k}\frac{(\lambda + h - \lambda_k)_+   - (\lambda - \lambda_k)_+}{h}\\
&\le \frac{1}{h}\Bigl[e^{dh} \limsup_{\lambda \to \infty}e^{-d(\lambda+h)} \sum_{k}(\lambda + h - \lambda_k)_+   -\liminf_{\lambda \to \infty}e^{-d\lambda} \sum_{k}(\lambda - \lambda_k)_+\Bigr]\\
&\le \frac{|\Omega| |B_d|}{d(2\pi)^d}\:\frac{e^{dh}-1}{h} \qquad \text{for every $h>0$}
\end{align*}
and thus 
\begin{equation}
  \label{eq:limsup-N-lambda-ineq}
\limsup_{\lambda\to\infty}e^{-d\lambda}N(\lambda) \le \frac{|\Omega| |B_d|}{d(2\pi)^d}\lim_{h \to 0^+}\frac{e^{dh}-1}{h}= \frac{|\Omega|\,  |B_d|}{(2\pi)^{d}}.
\end{equation}
Moreover, \eqref{Nabove} implies, together with Theorems \ref{1.1} and~\ref{2.1}, that 
\begin{align*}
&\liminf_{\lambda \to \infty}e^{-d\lambda}N(\lambda) \ge 
\liminf_{\lambda \to \infty}e^{-d\lambda} \sum_{k}\frac{(\lambda - \lambda_k)_+   - (\lambda -h - \lambda_k)_+}{h}\\
&\ge \frac{1}{h}\Bigl[e^{dh} \liminf_{\lambda \to \infty}e^{-d \lambda} \sum_{k}(\lambda - \lambda_k)_+   -e^{-dh}\limsup_{\lambda \to \infty} e^{-d(\lambda-h)} \sum_{k}(\lambda -h - \lambda_k)_+\Bigr]\\
&\ge \frac{|\Omega| |B_d|}{d(2\pi)^d}\:\frac{1-e^{-dh}}{h} \qquad \text{for every $h>0$}
\end{align*}
and therefore 
\begin{equation}
  \label{eq:liminf-N-lambda-ineq}
\liminf_{\lambda\to\infty}e^{-d\lambda}N(\lambda) \ge \frac{|\Omega| |B_d|}{d(2\pi)^d}\lim_{h \to 0^+}\frac{1-e^{-dh}}{h}= \frac{|\Omega|\,  |B_d|}{(2\pi)^{d}}.
\end{equation}
The claim follows by combining (\ref{eq:limsup-N-lambda-ineq}) and (\ref{eq:liminf-N-lambda-ineq}).  
\end{proof}

\section{An exact lower trace bound}\label{LowB2}

In this section we prove the following exact lower bound in the case of bounded open sets with Lipschitz boundary.

\begin {thm}\label{2.1-new-lower-bound}
Let $\Omega \subset \R^d$, $N \ge 2$ be an open bounded set with Lipschitz boundary, let $\tau \in (0,1)$, and let 
\begin{equation}
\label{def-C-Omega}
C_{\Omega,\tau} := \frac{1}{|\Omega|(2\pi)^d} \int_{\Bbb R^d}(1+|\rho|)^\tau \log(1+|\rho|) |\widehat{1_\Omega}(\rho)|^2\,d\rho, 
\end{equation}
where $1_\Omega$ denotes the indicator function of $\Omega$. 

For any $\lambda \ge 2 C_{\Omega,\tau}$,  we have
\begin{equation}\label{BL}
\sum_{k}(\lambda - \lambda_k)_+
\ge \frac{|\Omega|\, |B_d|}{(2\pi)^{d}\,d} \Bigl[e^{d\lambda} \,- \,a_\tau\, C_{\Omega,\tau}\,e^{(d-\tau)\lambda} \,- \,b_\tau\, C_{\Omega,\tau}^2\, 
e^{(d-2\tau)\lambda} \,-\, (d \lambda + 1)  \Bigr] \nonumber
\end{equation}
with $a_\tau:= \frac{d(d-\tau)-1}{d-\tau}$ and $b_\tau := 4d \tau$.
\end{thm}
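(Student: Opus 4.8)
The plan is to run the coherent-states lower bound from the proof of Theorem~\ref{2.1}, but now with the specific choice that $g$ is (a rescaling of) a genuine test function related to $1_\Omega$, so that no $\delta$-shrinking of $\Omega$ is needed and all error terms are quantified explicitly. Concretely, since $\Omega$ has Lipschitz boundary, Corollary~\ref{space-equivalence} tells us that $1_\Omega \in \cH(\Omega)$ and $1_\Omega e^{ix\xi} \in \cD(\mathcal H)$ for every $\xi \in \R^d$. Using the coherent states $e_\xi(x) = 1_\Omega(x) e^{-i x\xi}$ (now a fixed profile, not a translated bump), the operator inequality $\sum_k (\lambda-\lambda_k)_+ \ge \frac{1}{(2\pi)^d |\Omega|}\int_{\R^d} (e_\xi, (\lambda - \mathcal H)_+ e_\xi)_{L^2(\Omega)}\, d\xi$ combined with Jensen's inequality for the convex function $(\lambda - t)_+$ applied to the spectral measure of $\mathcal H$ gives
\begin{equation*}
\sum_k (\lambda-\lambda_k)_+ \ge \frac{1}{(2\pi)^d |\Omega|}\int_{\R^d} \Bigl(\lambda - \frac{(\mathcal H e_\xi, e_\xi)_{L^2}}{\|e_\xi\|_{L^2}^2}\Bigr)_+ \|e_\xi\|_{L^2}^2\, d\xi,
\end{equation*}
and here $\|e_\xi\|_{L^2}^2 = |\Omega|$. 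So the whole task reduces to estimating the symbol $\frac{1}{|\Omega|}(\mathcal H e_\xi, e_\xi)_{L^2}$ from above by $\log|\xi|$ plus a controlled remainder.

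The key computation is the remainder estimate. Writing $(\mathcal H e_\xi, e_\xi)_{L^2} = \frac{1}{(2\pi)^d}\int_{\R^d} \log|\eta|\, |\widehat{e_\xi}(\eta)|^2\, d\eta$ and using $\widehat{e_\xi}(\eta) = \widehat{1_\Omega}(\eta+\xi)$ (up to the sign convention for the Fourier transform), I substitute $\eta = \rho - \xi$ to get $\frac{1}{(2\pi)^d}\int_{\R^d}\log|\xi-\rho|\, |\widehat{1_\Omega}(\rho)|^2\, d\rho$. Splitting $\log|\xi - \rho| = \log|\xi| + \log\frac{|\xi-\rho|}{|\xi|}$ and using $\frac{1}{(2\pi)^d}\int |\widehat{1_\Omega}|^2 = \|1_\Omega\|_{L^2}^2 = |\Omega|$ (Plancherel), the leading term is exactly $|\Omega|\log|\xi|$, and
\begin{equation*}
\frac{1}{|\Omega|}(\mathcal H e_\xi, e_\xi)_{L^2} = \log|\xi| + R(\xi), \qquad R(\xi) = \frac{1}{|\Omega|(2\pi)^d}\int_{\R^d} \log\frac{|\xi-\rho|}{|\xi|}\, |\widehat{1_\Omega}(\rho)|^2\, d\rho.
\end{equation*}
The bound $\log\frac{|\xi-\rho|}{|\xi|} \le \log\bigl(1 + \frac{|\rho|}{|\xi|}\bigr) \le \frac{1}{|\xi|^\tau}(1+|\rho|)^\tau\log(1+|\rho|)$ for $|\xi|\ge 1$ (using $\log(1+ab)\le \log((1+a)(1+b))\le \log(1+a)+\log(1+b)$ and $\log(1+a/|\xi|)\le a^\tau/|\xi|^\tau$ type elementary inequalities, treating $a = |\rho|$) then yields $R(\xi) \le C_{\Omega,\tau}\, |\xi|^{-\tau}$, with exactly the constant $C_{\Omega,\tau}$ defined in \eqref{def-C-Omega}. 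This is the step I expect to be the main obstacle: getting the elementary inequality for $\log\frac{|\xi-\rho|}{|\xi|}$ with the right power $\tau$ and the right constant so that the final $a_\tau, b_\tau$ come out as stated — in particular making sure the estimate is uniform in $\xi$ away from a bounded set and controlling what happens for small $|\xi|$ (which is harmless since there the integrand $(\lambda - \log|\xi| - R)_+$ is anyway bounded and contributes to the lower-order terms).

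Finally I substitute back: $\sum_k(\lambda-\lambda_k)_+ \ge \frac{1}{(2\pi)^d}\int_{\R^d}\bigl(\lambda - \log|\xi| - C_{\Omega,\tau}|\xi|^{-\tau}\bigr)_+\, d\xi$, pass to polar coordinates with $|S^{d-1}| = d|B_d|$ and set $\mu = e^\lambda$. On the region where $\log|\xi| \le \lambda$, i.e. $|\xi| \le \mu$, I lower-bound $\bigl(\lambda - \log r - C_{\Omega,\tau}r^{-\tau}\bigr)_+ \ge \lambda - \log r - C_{\Omega,\tau}r^{-\tau}$ minus a correction accounting for the sub-region where the bracket is negative; the condition $\lambda \ge 2C_{\Omega,\tau}$ guarantees this correction region is small and controllable. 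Then I integrate term by term over $r \in (0,\mu)$: $\int_0^\mu r^{d-1}(\lambda - \log r)\, dr = \frac{\mu^d}{d}\bigl(\lambda - \frac1d + \frac1d\bigr)$-type expressions giving the $e^{d\lambda}$ and $-(d\lambda+1)$ terms after multiplying by $|B_d|$, $\int_0^\mu r^{d-1-\tau}\, dr = \frac{\mu^{d-\tau}}{d-\tau}$ giving (with the $C_{\Omega,\tau}$ prefactor and a combinatorial factor) the $a_\tau C_{\Omega,\tau} e^{(d-\tau)\lambda}$ term, and the quadratic-in-$C_{\Omega,\tau}$ term arises from estimating the measure of the bad region $\{r: \lambda - \log r - C_{\Omega,\tau}r^{-\tau} < 0\}$ near $r \approx \mu$, whose width is $O(C_{\Omega,\tau}\mu^{-\tau})$, producing the $b_\tau C_{\Omega,\tau}^2 e^{(d-2\tau)\lambda}$ correction. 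Collecting all terms and dividing through by $(2\pi)^d$ reproduces \eqref{BL}; the explicit values $a_\tau = \frac{d(d-\tau)-1}{d-\tau}$ and $b_\tau = 4d\tau$ come out of bookkeeping these integrals.
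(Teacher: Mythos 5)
Your proposal follows essentially the same route as the paper's proof: normalized coherent states $f_\xi=|\Omega|^{-1/2}1_\Omega e^{-ix\xi}$, Jensen's inequality applied to the spectral measure, the splitting $\log|\xi-\rho|\le\log|\xi|+\log\bigl(1+|\rho|/|\xi|\bigr)$ with the elementary inequality yielding the remainder $C_{\Omega,\tau}\max\bigl\{|\xi|^{-1},|\xi|^{-\tau}\bigr\}$, and then a polar-coordinate evaluation with $\mu=e^{\lambda}$ keeping track of the zeros of $r\mapsto -\log r-C_{\Omega,\tau}\mu^{-\tau}r^{-\tau}$. The one point to tighten is the small-$|\xi|$ region: for a lower bound you cannot simply declare it harmless, but the paper's fix is exactly what you need, namely the bound $\log(1+r/s)\le s^{-1}\log(1+r)$ for $s<1$ together with discarding the region $|\xi|<e^{-\lambda}$, which is precisely the origin of the $-(d\lambda+1)$ term in the asserted estimate.
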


\begin{rem}
In the definition of $C_{\Omega,\tau}$, we need $\tau<1$, otherwise the integral might not converge. In particular, if $\Omega=B_d$ is the unit ball in $\R^d$, we have 
$$
\widehat{1_\Omega}(\rho)= (2\pi)^{\frac{d}{2}} |\rho|^{-\frac{d}{2}}J_{\frac{d}{2}}(|\rho|)
$$
where $J_{\frac{d}{2}}(r)= O(\frac{1}{\sqrt{r}})$ as $r \to \infty$. Hence the integral defining $C_{\Omega,\tau}$ converges if $\tau <1$. A similar conclusion arises for cubes or rectangles, where 
$$
\widehat{1_\Omega}(\rho) = f_1(\rho_1) \cdot \dots \cdot f_d(\rho_d)
$$
and $f_j(s) = O(\frac{1}{s})$ as $|s| \to \infty$, $j=1,\dots,d$.

On the other hand, if $\Omega \subset \R^d$ is an open bounded set with Lipschitz boundary, we have 
\begin{equation}
  \label{eq:C-Omega-tau-finiteness}
C_{\Omega,\tau}<\infty \qquad \text{for $\tau \in (0,1)$.}  
\end{equation}
Indeed, in this case, $\Omega$ has finite perimeter, i.e., $1_\Omega \in BV(\R^d)$. Therefore, as noted e.g. in \cite[Theorem 2.14]{Lombardini}, $\Omega$ also has finite fractional perimeter
$$
P_\tau(\Omega)= \int_{\Omega}\int_{\R^d \setminus \Omega} |x-y|^{-d-\tau}\,dxdy = \frac{1}{2} \int \!\! \int_{\R^{2d}}\frac{(1_\Omega(x)-1_\Omega(y))^2}{|x-y|^{d+\tau}}\,dxdy
$$
for every $\tau \in (0,1)$. Moreover, $P_\tau(\Omega)$ coincides, up to a constant, with the integral
$$
\int_{\Bbb R^d}|\rho|^\tau |\widehat{1_\Omega}(\rho)|^2\,d\rho
$$
which therefore is also finite for every $\tau \in (0,1)$. Since moreover $1_\Omega$ and therefore also $\widehat{1_\Omega}$ are functions in $L^2(\Bbb R^d)$ and for every $\varepsilon>0$ there exists $C_\varepsilon>0$ with 
$$
(1+|\rho|)^\tau \log(1+|\rho|) \le C_\varepsilon \bigl(1 + |\rho|^{\tau+\varepsilon}\bigr) \qquad \text{for $\rho \in \R^d$,}
$$
it follows that (\ref{eq:C-Omega-tau-finiteness}) holds. 
\end{rem}

In the proof of Theorem~\ref{2.1-new-lower-bound}, we will use the following elementary estimate. 

\begin{lem}
\label{elem-lemma}
For $r \ge 0$, $s>0$ and $\tau \in (0,1)$, we have 
\begin{equation}
  \label{eq:first-elem-ineq}
\log\left(1 + \frac{r}{s}\right) \le \frac{1}{s} \log(1+r) \qquad \text{if $s \in (0,1)$}
\end{equation}
and 
\begin{equation}
  \label{eq:second-elem-ineq}
\log\left(1 + \frac{r}{s}\right) \le \frac{(1+r)^\tau}{s^\tau} \log(1+r) \qquad \text{if $s \ge 1$.}
\end{equation}
In particular, 
$$
\log\left(1 + \frac{r}{s}\right) \le \max \left\{\frac{1}{s}, \frac{1}{s^\tau} \right\} 
(1+r)^\tau\log(1+r) \qquad \text{for $r,s>0$.}
$$
\end{lem}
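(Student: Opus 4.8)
The plan is to prove the three displayed inequalities in Lemma~\ref{elem-lemma} by reducing everything to a single elementary fact about the function $t \mapsto \log(1+t)$ and then splitting on whether $s<1$ or $s\ge 1$. Throughout, $r\ge 0$, $s>0$, $\tau\in(0,1)$ are fixed.

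For \eqref{eq:first-elem-ineq}, assume $s\in(0,1)$. The key observation is the superadditivity-type estimate: for $a\ge 1$ and $t\ge 0$ one has $\log(1+at)\le a\log(1+t)$. Indeed, fix $a\ge 1$ and set $f(t):=a\log(1+t)-\log(1+at)$; then $f(0)=0$ and $f'(t)=\frac{a}{1+t}-\frac{a}{1+at}=\frac{a(a-1)t}{(1+t)(1+at)}\ge 0$, so $f\ge 0$ on $[0,\infty)$. Applying this with $a=1/s\ge 1$ and $t=r$ gives $\log(1+\tfrac{r}{s})\le \tfrac1s\log(1+r)$, which is exactly \eqref{eq:first-elem-ineq}.

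For \eqref{eq:second-elem-ineq}, assume $s\ge 1$, so $\frac1s\le 1$. Here the target is $\log(1+\tfrac rs)\le \tfrac{(1+r)^\tau}{s^\tau}\log(1+r)$. I would argue as follows: since $s\ge 1$ we trivially have $\log(1+\tfrac rs)\le \log(1+r)$, so it suffices to show $\log(1+r)\le \tfrac{(1+r)^\tau}{s^\tau}\log(1+r)$ — but wait, that is false for large $s$. Instead the correct route is to interpolate. Write $1+\tfrac rs = (1+r)^\theta\cdot(\text{something}\le 1)$ is not clean either; rather, I use the bound $\log(1+\tfrac rs)\le \tfrac1s\,r\cdot\tfrac{1}{1+0}$... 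Let me instead use the cleanest path: for $x\ge 0$, $\log(1+x)\le x^{\tau}\,\tfrac{\log(1+y)}{\text{?}}$. The honest clean statement is: for $0\le x\le y$ and $\tau\in(0,1)$, $\tfrac{\log(1+x)}{\log(1+y)}\le \bigl(\tfrac{\log(1+x)}{\log(1+y)}\bigr)^{?}$ — still circular. I will therefore prove \eqref{eq:second-elem-ineq} directly via the substitution reducing it to \eqref{eq:first-elem-ineq}: apply \eqref{eq:first-elem-ineq} with $s$ replaced by $s^{1/\tau}\ge 1$... no, that needs $s^{1/\tau}<1$.

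Here is the route I will actually take for \eqref{eq:second-elem-ineq}. Since $s\ge 1$, set $a:=s^{-\tau}\le 1$; I claim $\log(1+\tfrac rs)\le a\,(1+r)^{\tau}\log(1+r)$. Using $\tfrac rs\le \tfrac{r}{s^{\tau}}$ (because $s\ge 1$ and $\tau\le 1$ give $s\ge s^\tau$), and monotonicity of $\log$, it suffices to show $\log\!\bigl(1+\tfrac{r}{s^\tau}\bigr)\le s^{-\tau}(1+r)^\tau\log(1+r)$. Now put $u:=r$ and $b:=s^\tau\ge 1$; I must show $\log(1+\tfrac ub)\le \tfrac{(1+u)^\tau}{b}\log(1+u)$. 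By concavity of $\log$, $\log(1+\tfrac ub)=\log\!\bigl(\tfrac1b(1+u)+(1-\tfrac1b)\cdot 1\bigr)\ge \tfrac1b\log(1+u)$, which is the wrong direction; so instead use that for $b\ge 1$, $\tfrac{1}{b}(1+u)\le 1+\tfrac ub$, hence I get an \emph{upper} bound by estimating the excess: $\log(1+\tfrac ub)-\tfrac1b\log(1+u)$. One checks that $\sup_{u\ge 0}\bigl[\log(1+\tfrac ub)-\tfrac1b\log(1+u)\bigr]$ is bounded by $(1-\tfrac1b)\log(\text{const})$; but the cleanest finish is simply to invoke the elementary inequality $\log(1+v)\le \tfrac{v^\tau}{\tau}\cdot\tfrac{\tau}{?}$... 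Given the word ``elementary'' in the statement, I expect the author's proof for \eqref{eq:second-elem-ineq} uses the crude bound $\log(1+\tfrac rs)=\log\!\bigl(\tfrac{s+r}{s}\bigr)\le \log(1+r)$ for $s\ge 1$ together with the pointwise inequality $1\le \tfrac{(1+r)^\tau}{s^\tau}$ \emph{only when $s\le (1+r)$}, and a separate (trivial, since $r/s$ is small) estimate when $s\ge 1+r$; splitting into those two subcases makes both bounds immediate, and the ``In particular'' statement then follows by combining \eqref{eq:first-elem-ineq} and \eqref{eq:second-elem-ineq} with $\max\{\tfrac1s,\tfrac1{s^\tau}\}$ and the trivial bound $(1+r)^\tau\ge 1$. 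The main obstacle is purely bookkeeping: getting \eqref{eq:second-elem-ineq} with the clean exponent $\tau$ rather than $\tau+\varepsilon$, which is why the case split on $s$ versus $1+r$ (rather than a single monotonicity argument) is the right move.
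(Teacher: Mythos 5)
Your proof of \eqref{eq:first-elem-ineq} is correct and is essentially the paper's argument (equality at $r=0$ plus a comparison of $r$-derivatives, phrased via $a=1/s\ge 1$), and your subcase $s\le 1+r$ of \eqref{eq:second-elem-ineq}, as well as the deduction of the ``in particular'' statement, are fine. The genuine gap is the other subcase of \eqref{eq:second-elem-ineq}: for $s\ge 1$ and $r\le s-1$ you assert the bound is ``trivial, since $r/s$ is small'' and give no argument. It is not trivial: in this regime the prefactor $(1+r)^\tau/s^\tau$ is at most $1$, and the naive substitute $\log(1+r/s)\le \tfrac1s\log(1+r)$ is \emph{false} for $s>1$ (your own computation $f'(t)=\tfrac{a(a-1)t}{(1+t)(1+at)}$ shows the inequality reverses when $a=1/s<1$), so both sides are small and must be compared quantitatively. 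This is exactly the case the paper handles by a derivative comparison: for $0<r\le s-1$,
\begin{equation*}
\frac{d}{dr}\,\frac{(1+r)^\tau}{s^\tau}\log(1+r)=\frac{(1+r)^{\tau-1}}{s^\tau}\bigl(1+\tau\log(1+r)\bigr)\ge \frac{(1+r)^{\tau-1}}{s^\tau}\ge \frac1s\ge\frac1{s+r}=\frac{d}{dr}\log\Bigl(1+\frac rs\Bigr),
\end{equation*}
where the middle step uses $(1+r)^{\tau-1}\ge s^{\tau-1}$, valid precisely because $1+r\le s$ and $\tau-1<0$; combined with equality of both sides at $r=0$, this closes the case you left open.

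If you want to keep your case split and avoid derivatives in that subcase, you can finish it by the chain
$\log\bigl(1+\tfrac rs\bigr)\le \tfrac rs\le \tfrac{(1+r)\log(1+r)}{s}=\tfrac{(1+r)^{1-\tau}}{s^{1-\tau}}\cdot\tfrac{(1+r)^\tau}{s^\tau}\log(1+r)\le\tfrac{(1+r)^\tau}{s^\tau}\log(1+r)$,
where the second inequality is the elementary bound $r\le(1+r)\log(1+r)$ (note $h(r)=(1+r)\log(1+r)-r$ satisfies $h(0)=0$ and $h'(r)=\log(1+r)\ge0$) and the last uses $(1+r)/s\le1$. As written, however, your treatment of \eqref{eq:second-elem-ineq} consists of several explicitly abandoned attempts followed by a guess at the author's argument with the key subcase unproved, so it does not yet constitute a complete proof.
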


\begin{rem}
The obvious bound $\log(1 + \frac{r}{s}) \le \frac{r}{s}$ will not be enough for our purposes. We need an upper bound of the form $g(s)h(r)$ where $h$ grows less than linearly in $r$.    
\end{rem}

\begin{proof}[Proof of Lemma~\ref{elem-lemma}]
Let first $s \in (0,1)$. Since 
$$
\log\left(1 + \frac{r}{s}\right)\Big|_{r=0} = 0 = \frac{1}{s} \log(1+r)\Big|_{r=0}
$$
and, for every $r>0$, 
$$
\frac{d}{dr} \log\left(1 + \frac{r}{s}\right) = 
\frac{1}{s+r} \le \frac{1}{s+ sr} =  \frac{d}{dr} \frac{1}{s} \log(1+r),
$$
inequality (\ref{eq:first-elem-ineq}) follows. To see (\ref{eq:second-elem-ineq}), we fix $s>1$, and we note that
$$
\log\left(1 + \frac{r}{s}\right)\Big|_{r=0} = 0 = \frac{(1+r)^\tau}{s^\tau} \log(1+r)\Big|_{r=0}.
$$
Moreover, for $0 < r \le s-1$, we have  
\begin{align*}
&\frac{d}{dr} \frac{(1+r)^\tau}{s^\tau} \log(1+r)= 
\frac{(1+r)^{\tau-1}}{s^\tau}(1+ \tau \log(1+r))\\
&\ge \frac{(1+r)^{\tau-1}}{s^\tau} 
\ge \frac{1}{s}\ge \frac{1}{s+r}= \frac{d}{dr} \log\left(1 + \frac{r}{s}\right),
\end{align*}
so the inequality holds for $r \le s-1$. If, on the other hand, $r \ge s-1$, we have obviously
$$
\log\left(1 + \frac{r}{s}\right) \le \log(1 + r) \le \frac{(1+r)^\tau}{s^\tau} \log(1+r).
$$
\end{proof}

We may now complete the 

\begin{proof}[Proof of Theorem~\ref{2.1-new-lower-bound}]
For $\xi \in \R^d$, we define $f_\xi \in L^2(\Bbb R^d)$ by $f_\xi(x)= \frac{1}{\sqrt{|\Omega|}}1_{\Omega} e^{-i x \xi}$. Note that  $\|f_{\xi}\|_{L^2(\Bbb R^d)} = 1$ for any $\xi \in \Bbb R^d$. We write
\begin{align*}
\sum_{k}(\lambda - \lambda_k)_+ &= \sum_{k}(\lambda - \lambda_k)_+ \|\varphi_k\|_{L^2(\Omega)}^2 
= \frac{1}{(2\pi)^{d}} \sum_{k}(\lambda - \lambda_k)_+ 
\|\widehat{\varphi_k}\|_{L^2(\Bbb R^{d})}^2 
\\
&=\frac{|\Omega|}{(2\pi)^{d}}  \sum_{k}(\lambda - \lambda_k)_+ \int_{\Bbb R^{d}} |\langle f_\xi,\varphi_k \rangle|^2\,d\xi
\\
&=\frac{|\Omega|}{(2\pi)^{d}}  \int_{\Bbb R^{d}} \sum_{k}(\lambda - \lambda_k)_+  |\langle f_\xi,\varphi_k \rangle|^2\,d\xi.
\end{align*}
Since $\sum \limits_{k} |\langle f_\xi,\varphi_k \rangle|^2 = \|f_{\xi}\|_{L^2(\Bbb R^d)}^2 = 1$ for $\xi \in \Bbb R^d$, Jensen's inequality gives 
\begin{align}
\sum_{k}(\lambda - \lambda_k)_+ &\ge 
\frac{|\Omega|}{(2\pi)^{d}}  \int_{\Bbb R^{d}} \Bigl( \lambda \sum_{k}|\langle f_\xi,\varphi_k \rangle|^2    - \sum_k \lambda_k  |\langle f_\xi,\varphi_k \rangle|^2\Bigr)_+\,d\xi \nonumber\\
&=\frac{|\Omega|}{(2\pi)^{d}}  \int_{\Bbb R^{d}} \Bigl( \lambda   - \sum_k \lambda_k  |\langle f_\xi,\varphi_k \rangle|^2\Bigr)_+\,d\xi \nonumber\\
&=\frac{|\Omega|}{(2\pi)^{d}}  \int_{\Bbb R^{d}} \Bigl( \lambda   - ( \mathcal H  f_\xi, f_\xi ) \Bigr)_+\,d\xi. \label{jensen-new}
\end{align}
Here, since 
$$
\sqrt{|\Omega|}\, \widehat {f_\xi}(\eta-\xi)= 
\int_{\Omega}e^{-i(\eta-\xi) x}e^{-ix \xi}\,dx = 
\int_{\Omega}e^{-i \eta x} \,dx = \widehat{1_\Omega}(\eta)
$$
for $\eta,  \xi \in \R^d$, we have 
\begin{align}
&|\Omega|(2\pi)^d \left(\mathcal H  f_{\xi}, f_{\xi}    \right) = |\Omega| \int_{\Bbb R^d}
\log |\eta| |\widehat {f_\xi}(\eta)|^2d \eta = |\Omega| \int_{\Bbb R^d}
\log |\eta-\xi| |\widehat {f_\xi}(\eta-\xi)|^2d \eta \nonumber\\
&=\int_{\Bbb R^d} \log|\eta-\xi| |\widehat{1_\Omega}(\eta)|^2\,d\eta \le \int_{\Bbb R^d}
\left[ \log |\xi| +\log (1+|\eta|/|\xi|)\right] |\widehat{1_\Omega}(\eta)|^2\,d\eta \nonumber\\
&\le \log |\xi| \int_{\Bbb R^d} |\widehat{1_\Omega}(\eta)|^2\,d\eta+ \max \left\{ \frac{1}{|\xi|}, \frac{1}{|\xi|^\tau}\right\}
\int_{\Bbb R^d}(1+|\eta|)^{\tau} (\log(1+|\eta|)|\widehat{1_\Omega}(\eta)|^2\,d\eta
\nonumber\\
&= |\Omega|(2\pi)^d \Bigl(\log |\xi| +  \max \left\{ \frac{1}{|\xi|}, \frac{1}{|\xi|^\tau}\right\} C_{\Omega,\tau} \Bigr)\qquad \text{for $\xi \in \R^d$,}\label{jensen-new-compl} 
\end{align}
where $C_{\Omega,\tau}$ is defined in (\ref{def-C-Omega}). Here we used Lemma~\ref{elem-lemma}. Combining (\ref{jensen-new}) and (\ref{jensen-new-compl}), we get
\begin{equation}
\sum_{k}(\lambda - \lambda_k)_+ \ge 
\frac{|\Omega|}{(2\pi)^{d}}  \int_{\Bbb R^{d}} \Bigl(\lambda - \log |\xi| - 
\max \left\{ \frac{1}{|\xi|}, \frac{1}{|\xi|^\tau}\right\} C_{\Omega,\tau}\Bigr)_+d\xi. \label{intermediate-new} 
\end{equation}
Let us redefine the spectral parameter $\lambda =  \log \mu$ again.
Then we find
\begin{align}
&\int_{\Bbb R^{d}} \Bigl(\lambda - \log |\xi| - 
\max \left\{ \frac{1}{|\xi|}, \frac{1}{|\xi|^\tau} \right\} C_{\Omega,\tau} \Bigr)_+d\xi \nonumber
\\
&= \left|\Bbb S^{d-1} \right| \, \int_0^\infty \left(\log \frac{\mu}{r} - 
\max \left\{ \frac{1}{r}, \frac{1}{r^\tau}\right\} C_{\Omega,\tau}
\right)_+ \, r^{d-1}dr  \nonumber \\
&= 
\mu^d\, \left|\Bbb S^{d-1} \right| \, \int_0^\infty \left(-\log r  - 
\max \left\{ \frac{1}{\mu^{1-\tau} r}, \frac{1}{r^\tau}\right\} \frac{C_{\Omega,\tau}}{\mu^\tau}
\right)_+ \, r^{d-1}dr
 \nonumber \\
&\ge 
\mu^d\, \left|\Bbb S^{d-1} \right| \, \int_{\frac{1}{\mu}}^\infty \left(-\log r  - 
\frac{1}{r^\tau}\frac{C_{\Omega,\tau}}{\mu^\tau}
\right)_+ \, r^{d-1}dr.  \label{below2}
\end{align}
For the last inequality, we used the fact that $\frac{1}{\mu^{1-\tau} r} \le 
\frac{1}{r^\tau}$ for $r \ge \frac{1}{\mu}$.

Next we note that the function $r \mapsto f_\mu(r) = 
-\log r  - 
\frac{1}{r^\tau}\frac{C_{\Omega,\tau}}{\mu^\tau}$ satisfies 
\begin{equation}
  \label{eq:boundary-conditions}
f_\mu(r)<0 \quad \text{for $r \ge 1$}\qquad \text{and}\qquad 
\lim_{r \to 0^+}f_\mu(r)= -\infty.
\end{equation}
Moreover, this function has two zeros $r_1(\mu), r_2(\mu)$ with $0<r_1(\mu)< \frac{1}{\mu} < r_2(\mu)<1$ and 
$$
f_\mu(r)\ge 0 \qquad \text{if and only if}\quad r_1(\mu) \le r \le r_2(\mu).
$$
To see this, we write 
$$
f_\mu(r)= \frac{1}{r^\tau}g(r^\tau) \qquad \text{with}\qquad g(s)=-\frac{s}{\tau} \log s - \frac{C_{\Omega,\tau}}{\mu^\tau}
$$
and note that $g$ is strictly concave since $s \mapsto g'(s)= -\frac{1}{\tau} - \log s$ is strictly decreasing. Consequently, $g$ has at most two zeros, and the same is true for $f$. Combining this with (\ref{eq:boundary-conditions}) and the fact that 
$$
f(1/\mu) = \log \mu  - C_{\Omega,\tau} > 0 
$$
since $\lambda \ge 2 C_{\Omega,\tau} >C_{\Omega,\tau}$ by assumption, the claim above follows. From (\ref{below2}), we thus obtain the lower bound
\begin{align}
  \label{eq:r-2-mu-est}
\int_{\Bbb R^{d}} &\Bigl(\lambda - \log |\xi| - 
\max \bigl\{ \frac{1}{|\xi|}, \frac{1}{|\xi|^\tau} \bigl\} C_{\Omega,\tau} \Bigr)_+d\xi \\
&\ge \mu^d\, \left|\Bbb S^{d-1} \right| \, \int_{\frac{1}{\mu}}^{r_2(\mu)} \left(-\log r  - 
\frac{1}{r^\tau}\frac{C_{\Omega,\tau}}{\mu^\tau}
\right)_+ \, r^{d-1}dr. \nonumber  
\end{align}
Next, we claim that
\begin{equation}
\label{r-2-mu-lower-bound}  
r_2(\mu) \ge r_3(\mu):= e^{\frac{1}{2\tau}\bigl(\sqrt{1- \frac{4\tau C_{\Omega,\tau}}{\mu^\tau}}\;-1\bigr)}.
\end{equation}
Here we note that $\frac{4\tau C_{\Omega,\tau}}{\mu^\tau}=\frac{4\tau C_{\Omega,\tau}}{e^{\tau \lambda}} <1$ since $\lambda \ge 2 C_{\Omega,\tau}$ by assumption. 
To see (\ref{r-2-mu-lower-bound}), we write 
$$
r_3(\mu)= e^{- c_\mu \frac{C_{\Omega,\tau}}{\mu^\tau}}\qquad \text{with}\qquad  
c_\mu = \frac{\mu^\tau}{2 \tau  C_{\Omega,\tau}}\Bigl(1 - \sqrt{1- \frac{4\tau C_{\Omega,\tau}}{\mu^\tau}}\Bigr),
$$
noting that 
$$
\frac{\tau C_{\Omega,\tau}}{\mu^\tau} c_\mu^2 -c_\mu +1= 0
$$
and therefore 
\begin{align*}
&f(r_3(\mu)) = f(e^{-\frac{c_\mu  C_{\Omega,\tau}}{\mu^\tau}})=\frac{c_\mu C_{\Omega,\tau}}{\mu^\tau} - 
\frac{1}{e^{-\tau \frac{c_\mu C_{\Omega,\tau}}{\mu^\tau}}}
\frac{C_{\Omega,\tau}}{\mu^\tau}\\
&=\frac{C_{\Omega,\tau}}{\mu^\tau e^{-\tau \frac{c_\mu C_{\Omega,\tau}}{\mu^\tau}}} \Bigl( c_\mu e^{-\tau \frac{c_\mu C_{\Omega,\tau}}{\mu^\tau}} - 1\Bigr) h\ge \frac{C_{\Omega,\tau}}{\mu^\tau e^{-\tau \frac{c_\mu C_{\Omega,\tau}}{\mu^\tau}}}\Bigl( c_\mu \bigl(1 - \tau \frac{c_\mu C_{\Omega,\tau}}{\mu^\tau}\bigr)-1\Bigr) = 0.
\end{align*}
This proves (\ref{r-2-mu-lower-bound}). As a consequence of the inequality $\sqrt{1-a} \ge 1-\frac{a}{2} -\frac{a^2}{2}$ for $0 \le a \le 1$, we also have 
$$
r_3(\mu) \ge e^{- \bigl(\frac{C_{\Omega,\tau}}{\mu^\tau}+\frac{4\tau C_{\Omega,\tau}^2}{\mu^{2\tau}}\bigr)} = : r_4(\mu).
$$
Consequently, 
\begin{align*}
\int_{\Bbb R^{d}} &\Bigl(\lambda - \log |\xi| - 
\max \bigl\{ \frac{1}{|\xi|}, \frac{1}{|\xi|^\tau} \bigl\} C_{\Omega,\tau} \Bigr)_+d\xi \\
&\ge \mu^d\, \left|\Bbb S^{d-1} \right| \, \int_{\frac{1}{\mu}}^{r_4(\mu)} \left(-\log r  - 
\frac{1}{r^\tau}\frac{C_{\Omega,\tau}}{\mu^\tau}
\right)_+ \, r^{d-1}dr\\  
&= 
  \mu^d\, \left|\Bbb S^{d-1} \right| \, \Bigl[-\frac{r^d}{d} \log r + \frac{1}{d^2}r^d - \frac{C_{\Omega,\tau}}{\mu^\tau(d-\tau)}r^{d-\tau}  \Bigr]_{\frac{1}{\mu}}^{r_4(\mu)}\\
&= 
  \mu^d\, \left|\Bbb S^{d-1} \right| \,\Bigl( \Bigl[-\frac{r_4(\mu)^d}{d} \log r_4(\mu) + \frac{1}{d^2}r_4(\mu)^d - \frac{C_{\Omega,\tau}}{\mu^\tau(d-\tau)}r_4(\mu)^{d-\tau}  \Bigr]\\
&- \Bigl[\frac{\mu^{-d}}{d} \log \mu + \frac{1}{d^2}\mu^{-d} - \frac{C_{\Omega,\tau}}{\mu^\tau(d-\tau)}\mu^{\tau-d}  \Bigr]\Bigr),
\end{align*}
which implies that 
\begin{align*}
\int_{\Bbb R^{d}} &\Bigl(\lambda - \log |\xi| - 
\max \bigl\{ \frac{1}{|\xi|}, \frac{1}{|\xi|^\tau} \bigl\} C_{\Omega,\tau} \Bigr)_+d\xi \\
&\ge
  \mu^d\, \left|\Bbb S^{d-1} \right| \,\Bigl(\frac{1}{d^2}r_4(\mu)^d - \frac{C_{\Omega,\tau}}{\mu^\tau(d-\tau)}r_4(\mu)^{d-\tau}- \frac{\mu^{-d}}{d} \log \mu - \frac{1}{d^2}\mu^{-d}  \Bigr)\\
&=
  \mu^d\, \left|\Bbb S^{d-1} \right| \,\Bigl(\frac{1}{d^2}e^{- d\bigl(\frac{C_{\Omega,\tau}}{\mu^\tau}+\frac{4\tau C_{\Omega,\tau}^2}{\mu^{2\tau}}\bigr)}
 - \frac{C_{\Omega,\tau}}{\mu^\tau(d-\tau)}e^{- (d-\tau)\bigl(\frac{C_{\Omega,\tau}}{\mu^\tau}+\frac{4\tau C_{\Omega,\tau}^2}{\mu^{2\tau}}\bigr)}\\
&- \frac{\mu^{-d}}{d} \log \mu - \frac{1}{d^2}\mu^{-d}  \Bigr).
\end{align*}
Since 
$$
e^{- d\bigl(\frac{C_{\Omega,\tau}}{\mu^\tau}+\frac{4\tau C_{\Omega,\tau}^2}{\mu^{2\tau}}\bigr)} \ge 1-  d\bigl(\frac{C_{\Omega,\tau}}{\mu^\tau}+\frac{4\tau C_{\Omega,\tau}^2}{\mu^{2\tau}}\bigr)
$$
and 
$$
e^{- (d-\tau)\bigl(\frac{C_{\Omega,\tau}}{\mu^\tau}+\frac{4\tau C_{\Omega,\tau}^2}{\mu^{2\tau}}\bigr)} \le 1,
$$
we conclude that 
\begin{align*}
\int_{\Bbb R^{d}} &\Bigl(\lambda - \log |\xi| - 
\max \bigl\{ \frac{1}{|\xi|}, \frac{1}{|\xi|^\tau} \bigl\} C_{\Omega,\tau} \Bigr)_+d\xi \\
&\ge 
  \mu^d\, \frac{\left|\Bbb S^{d-1} \right|}{d^2} \,\Bigl(1-  d\bigl(\frac{C_{\Omega,\tau}}{\mu^\tau}+\frac{4\tau C_{\Omega,\tau}^2}{\mu^{2\tau}}\bigr)
 - \frac{C_{\Omega,\tau}}{\mu^\tau(d-\tau)}- \mu^{-d}(d \log \mu + 1) \Bigr)\\
&= 
\frac{\left|B_d \right|}{d} \,\Bigl(\mu^d 
- C_{\Omega,\tau}(d-\frac{1}{d-\tau})\mu^{d-\tau} - 4d\tau C_{\Omega,\tau}^2 \mu^{d-2\tau} - (d \log \mu + 1) \Bigr)\\
&= 
\frac{\left|B_d \right|}{d} \,\Bigl(e^{d \lambda} 
- \frac{d(d-\tau)-1}{d-\tau} C_{\Omega,\tau}e^{(d-\tau)\lambda} - 4d\tau C_{\Omega,\tau}^2 
e^{(d-2\tau)\lambda} - (d \lambda + 1) \Bigr).
\end{align*}
Combining the last estimate with (\ref{intermediate-new}), we get the asserted lower bound.
\end{proof}


\section{Appendix: Note on a bound for Bessel functions}
\label{sec:appendix:-note-bound}

The following elementary bound might be known but seems hard to find in this form. 

\begin{lem}
For $\nu \ge \sqrt{3}-2$ and $0 \le x \le 2 \sqrt{2(\nu+2)}$ we have 
$$
|J_\nu(x)| \le \frac{x^\nu}{2^\nu \Gamma(\nu+1)}.
$$
\end{lem}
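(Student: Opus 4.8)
The plan is to work directly from the power series of $J_\nu$, which gives an alternating series with a transparent ratio structure. Writing $t:=x^2/4$,
\[
J_\nu(x)=\sum_{m=0}^\infty \frac{(-1)^m}{m!\,\Gamma(m+\nu+1)}\Bigl(\frac x2\Bigr)^{2m+\nu}=\frac{x^\nu}{2^\nu\Gamma(\nu+1)}\sum_{m=0}^\infty(-1)^m a_m,\qquad a_m:=\frac{t^m}{m!\,(\nu+1)(\nu+2)\cdots(\nu+m)},
\]
with $a_0=1$. Since $\nu\ge\sqrt3-2>-1$, all $a_m>0$, and $a_{m+1}/a_m=\frac{t}{(m+1)(\nu+m+1)}$ is non-increasing in $m$ because $(m+1)(\nu+m+1)$ increases in $m$. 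The hypothesis $x\le 2\sqrt{2(\nu+2)}$ is precisely $t\le 2(\nu+2)$, hence $a_{m+1}/a_m\le a_2/a_1=\frac{t}{2(\nu+2)}\le 1$ for every $m\ge 1$; thus $a_1\ge a_2\ge a_3\ge\cdots\to 0$. It therefore suffices to prove $\bigl|\sum_{m\ge 0}(-1)^m a_m\bigr|\le 1$.

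For the upper bound I would regroup as $\sum_{m\ge0}(-1)^m a_m=a_0-\sum_{k\ge1}(a_{2k-1}-a_{2k})\le a_0=1$, using $a_{2k-1}\ge a_{2k}$. For the lower bound I would instead keep the first four terms explicitly and pair the rest, $\sum_{m\ge0}(-1)^m a_m=(a_0-a_1+a_2-a_3)+\sum_{k\ge2}(a_{2k}-a_{2k+1})\ge 1-P(t)$, where $P(t):=a_1-a_2+a_3$; here again the paired differences are $\ge 0$. (The regroupings are legitimate since $\sum_{m\ge0}(-1)^m a_m$ converges—e.g. by Leibniz applied to the tail from $m=1$—so the even partial sums converge to the same value, and nonnegativity of each pair gives the stated one-sided bounds.) Everything thus reduces to the single polynomial inequality $P(t)\le 2$ on $0\le t\le 2(\nu+2)$, which is the only genuine computation and the place where the constant $\sqrt3-2$ appears.

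To prove $P(t)\le 2$ I would first show $P$ is increasing on the relevant interval: $P(0)=0$ and
\[
P'(t)=\frac{1}{\nu+1}\Bigl[1-\frac{t}{\nu+2}+\frac{t^2}{2(\nu+2)(\nu+3)}\Bigr],
\]
and the bracket is a quadratic in $t$ with positive leading coefficient and discriminant $-\frac{\nu+1}{(\nu+2)^2(\nu+3)}<0$, hence strictly positive; so $P'>0$. Therefore $P(t)\le P\bigl(2(\nu+2)\bigr)$, and writing $s:=\nu+2\ (\ge\sqrt3)$ one computes at $t=2s$ that $a_1=a_2=\tfrac{2s}{s-1}$ and $a_3=\tfrac{4s^2}{3(s^2-1)}$, so $P(2s)=a_3=\tfrac{4s^2}{3(s^2-1)}\le 2$ exactly when $s^2\ge 3$, i.e. $\nu\ge\sqrt3-2$. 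Combining, $\sum_{m\ge0}(-1)^m a_m\ge 1-2=-1$, and with the upper bound we get $\bigl|\sum_{m\ge0}(-1)^m a_m\bigr|\le 1$, which is the claim. I do not anticipate a real obstacle; the only points requiring a little care are the justification of the series regroupings and the bookkeeping in the evaluation of $P$ at the right endpoint.
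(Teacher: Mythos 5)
Your proposal is correct and follows essentially the same route as the paper: expand $J_\nu$ in its power series, use $t\le 2(\nu+2)$ to make the terms from $m=1$ on monotonically decreasing, get the upper bound by pairing, and get the lower bound by keeping the first four terms and bounding the cubic $a_1-a_2+a_3$ on $[0,2(\nu+2)]$ by monotonicity plus evaluation at the endpoint, where $\nu\ge\sqrt3-2$ enters. The only cosmetic differences are that you prove positivity of the derivative via the discriminant (the paper evaluates it at its minimum $t=\nu+3$) and that your endpoint computation, exploiting $a_1=a_2$ at $t=2(\nu+2)$, is a bit cleaner than the paper's (which in fact contains a typo, $\tfrac{2}{\nu+1}$ in place of $2(\nu+1)$).
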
 

\begin{proof}
We use the representation 
$$
J_\nu(x)= \Bigl(\frac{x}{2}\Bigr)^{\nu} \sum_{m=0}^\infty \frac{(-1)^m}{m! \Gamma(m+\nu + 1)} \Bigl(\frac{x}{2}\Bigr)^{2m}.
$$
For $0 \le x \le 2 \sqrt{2(\nu+2)}$ and $m \ge 1$, we have 
$$
\Bigl(\frac{x}{2}\Bigr)^{2} \le (m+1)(m+\nu+1) = \frac{(m+1) \Gamma(m+\nu + 2)}{\Gamma(m+\nu + 1)}
$$
and therefore 
\begin{equation}
  \label{eq:bessel-proof-1}
\frac{\Gamma(\nu+1)}{(m+1)! \Gamma(m+\nu + 2)} \Bigl(\frac{x}{2}\Bigr)^{2(m+1)} 
\le \frac{\Gamma(\nu+1)}{m! \Gamma(m+\nu + 1)} \Bigl(\frac{x}{2}\Bigr)^{2m}.
\end{equation}
Consequently, 
\begin{align*}
J_\nu(x) &= \frac{x^\nu}{2^\nu \Gamma(\nu+1)}\Bigl[1 + \sum_{m=1}^\infty \frac{(-1)^m \Gamma(\nu+1)}{m! \Gamma(m+\nu + 1)} \Bigl(\frac{x}{2}\Bigr)^{2m}\Bigr]\le \frac{x^\nu}{2^\nu \Gamma(\nu+1)}.
\end{align*}
From (\ref{eq:bessel-proof-1}) we also deduce that 
\begin{align*}
J_\nu(x) &\ge \frac{x^\nu}{2^\nu \Gamma(\nu+1)} \Bigl[1 - \frac{\Gamma(\nu+1)}{\Gamma(\nu + 2)} \Bigl(\frac{x}{2}\Bigr)^{2}+ \frac{\Gamma(\nu+1)}{2\Gamma(\nu + 3)} \Bigl(\frac{x}{2}\Bigr)^{4}- \frac{\Gamma(\nu+1)}{6\Gamma(\nu + 4)} \Bigl(\frac{x}{2}\Bigr)^{6}\Bigr]\\
&= \frac{x^\nu}{2^\nu \Gamma(\nu+1)} \Bigl[1-\frac{1}{\nu + 1}f\bigl(\bigl(\frac{x}{2}\bigr)^{2} \bigr)\Bigr]
\end{align*}
with $f: \R \to \R$ given by $f(t)= t - \frac{t^2}{2(\nu+2)}+ \frac{t^3}{6(\nu+2)(\nu+3)}$. Since 
$$
f'(t)= 1- \frac{t}{\nu+2} + \frac{t^2}{2(\nu+2)(\nu+3)}, \qquad \text{and}\qquad f''(t)= \frac{1}{\nu+2}\bigl(\frac{t}{\nu+3}- 1\bigr)
$$
we have 
$$
f'(t) \ge f'(\nu+3) = 1-  \frac{\nu+3 }{\nu+2} + \frac{\nu+3}{2(\nu+2)}
= 1 - \frac{1}{2}\frac{\nu+3 }{\nu+2} \ge 0 
\quad \text{for $t \in \R$ if $\nu \ge -1$}
$$
and therefore 
$$
f(t) \le f(2(\nu +2))= 2(\nu+2) - \frac{[2(\nu+2)]^2}{2(\nu+2)}+ \frac{[2(\nu+2)]^3}{6(\nu+2)(\nu+3)}= \frac{4(\nu+2)^2}{3(\nu+3)} 
$$
for $t \le 2(\nu+2)$ if $\nu \ge -1$. Since $\frac{4(\nu+2)^2}{3(\nu+3)} \le \frac{2}{\nu+1}$ for $\nu \ge \sqrt{3}-2$, we conclude that
$$
J_\nu(x) \ge \frac{x^\nu}{2^\nu \Gamma(\nu+1)} \Bigl[1-\frac{1}{\nu + 1}f\bigl(\bigl(\frac{x}{2}\bigr)^{2} \bigr)\Bigr]\ge - \frac{x^\nu}{2^\nu \Gamma(\nu+1)}.
$$
for $\nu \ge \sqrt{3}-2$ and $0 \le x \le 2 \sqrt{2(\nu+2)}$. The claim thus follows. 
\end{proof}

\noindent
{\it Acknowledgements}.
AL was supported by the RSF grant  19-71-30002.


\begin{thebibliography}{KVW09}


\bibitem[BK] {BK} R.~Ba$\rm{\tilde{n}}$uelos  and T.~Kulczycki, \textit{The Cauchy process and the Steklov problem}, Journal of Functional Analysis, \textbf{211} (2004), 355--423. 
%
\bibitem[B]{B} W.~Beckner,  \textit{Pitt's inequality and the uncertainty principle},
Proc. AMS, textbf{123}, (6)  (1995),  1897--1905.
%
\bibitem[Bz1]{Bz1}  F.A.~Berezin, \textit{Convex functions of operators}, Mat.sb., \textbf{88}  (1972), 268--278.
%
 \bibitem[Bz2]{Bz2} F.A.~Berezin, \textit{Covariant and contravariant symbols of operators}, 
English transl. Math. USSR Izvestija, \textbf{6} (1972), 1117--1151.
%
\bibitem[CW]{HW} H.~Chen and T.~Weth, \textit{The Dirichlet Problem for the Logarithmic Laplacian},  Comm. Partial Differential Equations, \textbf{44} (2019),1100--1139.
  %

\bibitem[DNP]{DNP} L.~De Luca, M.~Novaga, M.~Ponsiglione, \textit{The 0-fractional perimeter between fractional perimeters and Riesz potentials}, Preprint, arXiv:1906.06303
  
\bibitem[FKV]{FKV} 
M.~Felsinger, M.~Kassmann and P.~Voigt, \textit{The Dirichlet problem for nonlocal operators}, Math. Zeit., \textbf{279} (2015), 779--809.
%
\bibitem[G]{G} G.~Geisinger \textit{A short proof of Weyl's law for fractional operators}, JMP, \textbf{55}, 011504 (2014),  doi: 10.1063/1.4861935.
%
\bibitem[JW]{jarohs-weth} S.~Jarohs and T.~Weth, \textit{
Local compactness and nonvanishing for weakly singular nonlocal quadratic forms}, Nonlinear Analysis, \textbf{193} (2020), Article 111431

\bibitem[JSW]{jarohs-saldana-weth} S.~Jarohs, A. Salda\~{n}a and T.~Weth, \textit{A new look at the fractional Poisson problem via the Logarithmic Laplacian},
J. Funct. Anal., \textbf{279}, 108732 (2020), https://doi.org/10.1016/j.jfa.2020.108732

\bibitem[Lap]{Lap} A.~Laptev, \textit{Dirichlet and Neumann Eigenvalue Problems on Domains in Euclidean Spaces}, J. Funct. Anal., \textbf{151} (1997), 531--545.
%
\bibitem[LY]{LY} P.~Li and S.-T.~Yau,  \textit{On the Schr\"{o}dinger equation and the eigenvalue problem},
Comm. Math. Phys., \textbf{88} (1983), 309--318.
%
\bibitem[L]{L} E.H.~Lieb,  \textit{The classical limit of quantum spin systems}, Comm. Math. Phys., \textbf{31} (1973), 327--340.

%
\bibitem[LL]{LL} E.H.~Lieb and M.~Loss,  \textit{Analysis}, AMS, Graduate Studies in Mathematics, \textbf{14}, 1997, 346 pages.
%
\bibitem[Lom]{Lombardini} L.~Lombardini, \textit{Fractional Perimeter
and Nonlocal Minimal Surfaces}, https://arxiv.org/abs/1508.06241
%
\bibitem[P]{Pego} R.L.~Pego. 
\textit{Compactness in $L^2$ and the Fourier Transform}, Proc. Amer. Math. Soc., \textbf{95} (1985), 252--254.
%
\bibitem[Roz]{Roz} G.V.~Rozenblum,  \textit{Distribution of the discrete spectrum of singular differential operators},  Dokl. AN SSSR, \textbf{202} (1972), 1012--1015, Izv. Vuzov, Matematika, \textbf{1} (1976), 75--86.
%


\end{thebibliography}
\end{document}